\documentclass{amsart} 
\usepackage{amsmath,amssymb,amsthm,amsfonts,amsxtra,mathtools}
\usepackage{enumerate,verbatim,mathrsfs,comment,color,url}
\usepackage[all,2cell,ps]{xy}
\usepackage[pagebackref]{hyperref}
\usepackage{graphicx}
\usepackage{verbatim}

\DeclareMathOperator{\ann}{ann}

\DeclareMathOperator{\depth}{depth}

\DeclareMathOperator{\Ext}{Ext}

\DeclareMathOperator{\gdim}{G-dim}

\DeclareMathOperator{\grade}{grade}
\DeclareMathOperator{\Hom}{Hom}

\DeclareMathOperator{\id}{id}

\DeclareMathOperator{\pd}{pd}

\DeclareMathOperator{\Soc}{Soc}
\DeclareMathOperator{\Supp}{Supp}
\DeclareMathOperator{\Tor}{Tor}
\DeclareMathOperator{\Tot}{Tot}

\DeclareMathOperator{\type}{type}

\renewcommand{\ge}{\geqslant}
\renewcommand{\le}{\leqslant}

\newcommand{\bz}{\mathbb{Z}}

\newcommand{\fm}{\mathfrak{m}}

\renewcommand{\iff}{if and only if }

\theoremstyle{plain}
\newtheorem{theorem}{Theorem}[section]
\newtheorem{lemma}[theorem]{Lemma}
\newtheorem{proposition}[theorem]{Proposition}
\newtheorem{corollary}[theorem]{Corollary}

\newenvironment{customtheorem}[1]
  {\innercustomtheorem}
  {\endinnercustomtheorem}

 \newenvironment{customcorollary}[1]
 {\innercustomcorollary}
 {\endinnercustomcorollary}
  
\theoremstyle{definition}
\newtheorem{definition}[theorem]{Definition}

\newtheorem{example}[theorem]{Example}

\newtheorem{para}[theorem]{}
\newtheorem{question}[theorem]{Question}
\newtheorem{setup}[theorem]{Setup}

\theoremstyle{remark}
\newtheorem{remark}[theorem]{Remark}

\numberwithin{equation}{section}

\title[Ext, Tate cohomology and Gorenstein rings]{Gorenstein rings via homological dimensions, and symmetry in vanishing of Ext and Tate cohomology}
%
%

\author[D.~Ghosh]{Dipankar Ghosh}
\address{Department of Mathematics, Indian Institute of Technology Kharagpur, West Bengal - 721302, India}
\email{dipankar@maths.iitkgp.ac.in, dipug23@gmail.com}

\author[T.J.~Puthenpurakal]{Tony J. Puthenpurakal}
\address{Department of Mathematics, Indian Institute of Technology Bombay, Powai, Mumbai 400076, India}
\email{tputhen@math.iitb.ac.in}


\date{February 17, 2023\\
Corresponding Author: Dipankar Ghosh}
\subjclass[2010]{Primary 13D05, 13D07, 13H10} 
\keywords{Gorenstein rings; Cohen-Macaulay and G-perfect modules; Homological dimensions; Ext and Tate (co)homology; Spectral sequences}

\begin{document}

\pagenumbering{arabic}
\thispagestyle{empty}
 
 \begin{abstract}
 	The aim of this article is to consider the spectral sequences induced by tensor-hom adjunction, and provide a number of new results. Let $R$ be a commutative Noetherian local ring of dimension $d$. In the 1st part, it is proved that $R$ is Gorenstein if and only if it admits a nonzero CM (Cohen-Macaulay) module $M$ of finite Gorenstein dimension $g$ such that $\type(M) \le \mu( \Ext_R^g(M,R) )$ (e.g., $\type(M)=1$). This considerably strengthens a result of Takahashi. Moreover, we show that if there is a nonzero $R$-module $M$ of depth $\ge d - 1$ such that the injective dimensions of $M$, $\Hom_R(M,M)$ and $\Ext_R^1(M,M)$ are finite, then $M$ has finite projective dimension and $R$ is Gorenstein. In the 2nd part, we assume that $R$ is CM with a canonical module $\omega$. For CM $R$-modules $M$ and $N$,
 	we show that the vanishing of one of the following implies the same for others: $\Ext_R^{\gg 0}(M,N^{+})$, $\Ext_R^{\gg 0}(N,M^{+})$ and $\Tor_{\gg 0}^R(M,N)$, where $M^{+}$ denotes $\Ext_R^{d-\dim(M)}(M,\omega)$. This strengthens a result of Huneke and Jorgensen. Furthermore, we prove a similar result for Tate cohomologies under the additional condition that $R$ is Gorenstein.
 \end{abstract}
\maketitle

\section{Introduction}
	
	Unless otherwise specified, throughout $(R,\fm,k)$ is a commutative Noetherian local ring.
	All $R$-modules are assumed to be finitely generated.
	
	The aim of this article is to consider the spectral sequences induced by tensor-hom adjunction, and prove a number of new results on the following topics: (I) Criteria for a local ring to be Gorenstein in terms of Gorenstein and injective dimensions of certain modules. (II) Symmetry in vanishing of Ext and Tate cohomology over CM (Cohen-Macaulay) and Gorenstein local rings respectively.
	
	(I) Our results on the 1st topic are motivated by the following characterizations.
	
	\begin{theorem}[Auslander-Bridger, Takahashi, Celikbas--Sather-Wagstaff] \label{thm:known-charac-via-G-dim}
		The ring $R$ is Gorenstein if and only if at least one of the following holds true.
		\begin{enumerate}[\rm (1)]
			\item $\gdim_R(k) < \infty$, see, e.g., \cite[(1.4.9)]{Cr}.
			\item \cite[(4.20)]{AB69} $\gdim_R(M) < \infty$ for every $R$-module $M$.
			\item \cite[Thm.~6.5]{Tak06} For some $0 \le n \le \depth(R)+2$, the $n$th syzygy module $\Omega_R^n(k)$ has a nonzero direct summand of G-dimension $0$.
			\item \cite[Thm.~4.8]{CS16} $R$ has an integrally closed ideal $I$ such that $\depth(R/I) = 0$ and $\gdim_R(I) < \infty$. See \cite[Cor.~5.5.(4)]{GS22} for a stronger result.
		\end{enumerate}
	\end{theorem}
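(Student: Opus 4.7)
The plan is to reduce everything to the basic principle that $R$ is Gorenstein exactly when $\injdim_R(R) < \infty$, and to translate each of the four hypotheses into vanishing of $\Ext_R^{\gg 0}(k, R)$. The two recurring tools are (a) the defining vanishing $\Ext_R^i(L, R) = 0$ for all $i \ge 1$ whenever $\gdim_R(L) = 0$, and (b) the Auslander-Bridger formula $\gdim_R(M) + \depth(M) = \depth(R)$ for modules of finite G-dimension.

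First I would establish (1). Under the hypothesis $\gdim_R(k) < \infty$, Auslander-Bridger forces $\gdim_R(k) = \depth(R) =: d$, so $L := \Omega_R^d(k)$ has G-dim $0$; this yields $\Ext_R^{d+i}(k, R) \cong \Ext_R^i(L, R) = 0$ for all $i \ge 1$. Since the Bass numbers satisfy $\mu^j(R) = \dim_k \Ext_R^j(k, R)$, this forces $\injdim_R(R) \le d$ and hence Gorensteinness. Assertion (2) then follows: the forward direction is (1) with $M = k$, and the converse is the classical result that over a Gorenstein ring every finitely generated module has finite G-dimension. For (3), following Takahashi, the nonzero G-dim $0$ summand $L$ of $\Omega_R^n(k)$ again gives $\Ext_R^i(L, R) = 0$ for all $i \ge 1$; combined with the range restriction $n \le \depth(R)+2$ and depth comparisons along short exact sequences of syzygies, this suffices to bound $\injdim_R(R)$. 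For (4), the integral closedness of $I$ together with $\depth(R/I) = 0$ supplies the extra structure (socle elements in $R/I$, control of minimal reductions) needed to transfer finiteness of $\gdim_R(I)$ through the sequence $0 \to I \to R \to R/I \to 0$ and ultimately reduce to a case of (1) or (3).

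The main obstacle I anticipate is (3): extracting Gorensteinness from a single nonzero G-dim $0$ summand of a syzygy of $k$, without any hypothesis on the complementary summand, and in the restricted range $n \le \depth(R)+2$, requires a delicate analysis of how depth and G-dim are distributed among direct summands, in particular a sharp comparison between $\depth \Omega_R^n(k)$ and $\depth L$. For a self-contained account I would quote (3) and (4) from their cited sources rather than reprove them, since the excerpt presents them as established benchmarks against which the new results of the paper are to be measured.
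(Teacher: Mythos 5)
The paper offers no proof of this theorem: it is presented as a background survey of known characterizations, with each item simply citing its source, so there is no in-text argument to compare against. What can be assessed is whether your sketch is sound. Your argument for (1) is correct and standard: the Auslander--Bridger formula gives $\gdim_R(k) = \depth(R) =: d$, so $\Omega_R^d(k)$ is totally reflexive, dimension shifting yields $\Ext_R^{j}(k,R)=0$ for all $j>d$, and the vanishing of high Bass numbers forces $\injdim_R(R) < \infty$. For (2), the forward direction is a special case of (1), and the converse is the classical fact that over a Gorenstein local ring every finitely generated module has finite G-dimension.

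Your hints for (3) and (4) do not constitute proofs, and they somewhat misstate the actual arguments in the sources. In particular, the proof of (4) in \cite{CS16} does not ``ultimately reduce to a case of (1) or (3)'': the route there is to show first that an integrally closed ideal $I$ with $\depth(R/I)=0$ is a $\pd$-test (hence $\gdim$-test) module \cite[2.3]{CS16}, and then to invoke a genuinely separate ingredient, namely their Theorem~4.4 that a $\gdim$-test module $M$ with $\Ext_R^{i}(M,R)=0$ for $i\gg 0$ forces $R$ to be Gorenstein (the present paper gives a short elementary proof of this in the CM case as Theorem~\ref{thm:G-dim-test-Gor-charac}). Likewise, Takahashi's theorem (3) rests on structural results about direct summands of syzygies rather than on depth bookkeeping alone, and the bound $n \le \depth(R)+2$ is an artefact of that specific method. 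Since you explicitly plan to quote (3) and (4) from their sources rather than reprove them --- which is exactly what the paper does --- the proposal is in the end adequate, but be aware that the mechanism sketches you offered for those two items are not faithful to the cited arguments.
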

	
	Various local rings can be characterized in terms of injective dimension of certain modules. For example, $R$ is regular \iff the residue field $k$ has finite injective dimension, cf.~\cite[3.1.26]{BH93}. It is generalized further in \cite[Thm.~3.7]{GGP} that $R$ is regular if and only if some syzygy $\Omega_R^n(k)$ $(n \ge 0)$	has a nonzero direct summand of finite injective dimension. The Bass'	conjecture (known by \cite{PS73, Rob87}) says that $R$ is CM if it admits a nonzero $($finitely generated$)$ module of finite injective dimension. For Gorenstein local rings, the following are known.
	
	\begin{theorem}[Peskine-Szpiro and Foxby]{~}\\ \label{thm:known-charac-via-inj-dim}
		The ring $R$ is Gorenstein \iff at least one of the following holds true.
		\begin{enumerate}[\rm (1)]
			\item \cite[Ch.II, (5.5)]{PS73}
			$R$ admits a nonzero cyclic module of finite injective dimension.
			\item \cite{Fo77} $R$ admits a nonzero module $M$ such that both injective dimension $\id_R(M)$ and projective dimension $\pd_R(M)$ are finite.
			
		\end{enumerate}
	\end{theorem}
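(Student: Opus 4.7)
My plan is to derive both parts from the Bass conjecture (proved by Peskine--Szpiro \cite{PS73} and Roberts \cite{Rob87}), which asserts that any nonzero finitely generated $R$-module of finite injective dimension forces $R$ to be Cohen--Macaulay and satisfies $\id_R(M) = \depth R$. The forward direction of each characterization is immediate via $M = R$, so all the work is in the converse.

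For part (2), I would use the Bass-series identity
\[
I_M^R(t) \;=\; P_M^R(t) \cdot I_R^R(t) \quad \text{in } \mathbb{Z}[\![t]\!],
\]
valid for any $M$ with $\pd_R(M) < \infty$, where $I_M^R(t) := \sum_i \mu^i(\fm, M)\, t^i$ is the Bass series and $P_M^R(t) := \sum_i \beta_i(M)\, t^i$ is the Poincar\'e series. This identity is a shadow of the hyper-Ext spectral sequence: after resolving $M$ by a minimal free complex $F_\bullet$, the induced differentials on $\Ext_R^\bullet(k, F_\bullet)$ vanish because the matrices of $F_\bullet$ have entries in $\fm$ and $\Ext_R^\bullet(k, R)$ is $\fm$-annihilated, yielding $\mu^i(M) = \sum_j \beta_j(M)\, \mu^{i-j}(R)$. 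Under the hypotheses of (2), both $P_M^R$ and $I_M^R$ are polynomials with $P_M^R(0) = \beta_0(M) \ge 1$. Comparing coefficients of $t^m$ for $m > \deg I_M^R$ gives a sum $\sum_j \beta_j(M)\, \mu^{m-j}(R) = 0$ of nonnegative integers with positive leading factor $\beta_0(M)$, which forces $\mu^m(R) = 0$. Hence $\id_R(R) < \infty$, and $R$ is Gorenstein.

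For part (1), the identity above is not directly available since $R/I$ need not have finite projective dimension. My plan is to pass to the completion (which preserves both the finite injective dimension of the cyclic module and the Gorenstein property), so that $R$ is equipped with a canonical module $\omega$, and then invoke Foxby equivalence: any finitely generated module of finite injective dimension lies in the Bass class and thus has the form $\omega \otimes_R N$ with $N := \Hom_R(\omega, R/I)$ of finite projective dimension. Tensoring $R/I \cong \omega \otimes_R N$ with $k$ and counting $k$-dimensions yields
\[
1 \;=\; \mu_R(R/I) \;=\; \mu_R(\omega) \cdot \mu_R(N),
\]
forcing $\mu_R(\omega) = 1$. Since for a Cohen--Macaulay local ring with canonical module the number of generators of $\omega$ equals the Cohen--Macaulay type, we conclude $\type(R) = 1$, so $R$ is Gorenstein.

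The main technical obstacle in each case is importing external machinery. In (2), one must justify that higher differentials in the hyper-Ext spectral sequence also vanish, which is subtle but ultimately a consequence of minimality. In (1), Foxby equivalence requires the existence of a canonical module (hence the passage to completion); a more self-contained argument controlling the Bass numbers of $R$ directly from those of $R/I$ via the short exact sequence $0 \to I \to R \to R/I \to 0$ would be preferable, but seems to require more structural information about $I$ than cyclicity alone provides.
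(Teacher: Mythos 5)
The paper does not prove this theorem; it is a stated result attributed to Peskine--Szpiro and Foxby, with citations to \cite{PS73} and \cite{Fo77}. So there is no proof in the paper to compare against; what follows is an assessment of your proposed argument on its own merits.

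Your outline for part (1), via passage to the completion and Sharp--Foxby's description of finite-injective-dimension modules over a CM local ring with canonical module, is sound. The one point worth tightening is that "Foxby equivalence'' as usually stated only places $N=\Hom_R(\omega, R/I)$ in the Auslander class, which gives finite Gorenstein-projective dimension rather than finite projective dimension outright; what you actually need is the sharper theorem of Sharp and Foxby that a finitely generated module has finite injective dimension if and only if it is of the form $\omega\otimes_R P$ with $\pd_R P<\infty$ and $\Tor_{>0}^R(\omega,P)=0$. With that, the equality $\mu(R/I)=\mu(\omega)\,\mu(P)$ follows as you say and gives $\type(R)=1$.

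For part (2), there is a genuine error: the Bass-series identity
\[
I_M^R(t) \;=\; P_M^R(t)\cdot I_R^R(t),
\qquad\text{equivalently}\qquad
\mu^i(M)=\sum_{j}\beta_j(M)\,\mu^{i-j}(R),
\]
is false. Take $R$ a discrete valuation ring and $M=k$. Then $I_k^R(t)=1+t$ while $P_k^R(t)\cdot I_R^R(t)=(1+t)\cdot t=t+t^2$; already the constant terms disagree, since $\mu^0(k)=\dim_k\Hom_R(k,k)=1$ but $\beta_0(k)\mu^0(R)=1\cdot 0=0$. The correct relation between Bass numbers of a module of finite projective dimension and those of the ring carries a shift in the opposite direction, namely $\mu^i(M)=\sum_j\beta_j(M)\,\mu^{i+j}(R)$ (which one checks on the same example: $\mu^0(k)=\beta_0\mu^0(R)+\beta_1\mu^1(R)=0+1=1$, $\mu^1(k)=\beta_0\mu^1(R)+\beta_1\mu^2(R)=1+0=1$, $\mu^2(k)=0$). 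This is \emph{not} of the form "$P_M\cdot I_R$" as a power-series product, so the coefficient comparison you wrote needs to be reformulated. The good news is that the conclusion still falls out of the corrected relation: if $\id_R(M)<\infty$, then $\mu^i(M)=0$ for $i>\depth R$, so for such $i$ one has $0=\sum_j\beta_j(M)\mu^{i+j}(R)$ with all terms nonnegative and $\beta_0(M)\ge 1$, forcing $\mu^i(R)=0$; hence $\id_R R<\infty$. You should also supply a proof of the corrected relation, since (as you yourself flag) the degeneration of the hyper-Ext spectral sequence at $E_2$ is not automatic from the vanishing of $d_1$, and the clean way to see it is by induction on $\pd_R M$ using the syzygy sequence $0\to\Omega M\to F_0\to M\to 0$ and the fact that $\Ext^i_R(k,\Omega M)\to\Ext^i_R(k,F_0)$ vanishes because the inclusion factors through $\fm F_0$ and $\Ext^i_R(k,R)$ is a $k$-vector space --- a point that itself requires a short argument.
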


	Celikbas and Sather-Wagstaff deduced Theorem~\ref{thm:known-charac-via-G-dim}.(4) by proving that if $M$ is a $\gdim$-test $R$-module (Definition~\ref{defn:G-dim-test}) such that $\Ext^i_R(M, R) = 0$ for all $ i \gg 0 $, then $R$ is Gorenstein; see \cite[Thm.~4.4]{CS16}. Their proof relies on the use of derived categories. In the present study, we give a very short and elementary proof of this result under the assumption that $R$ is CM, see Theorem~\ref{thm:G-dim-test-Gor-charac}.
%
%
%
	In Section~\ref{sec:charac-Gor-via-G-dim}, we study G-perfect modules (Definition~\ref{defn:grade-and-G-perfect}). In Theorem~\ref{thm:CM-iff-G-perfect-for-finite-G-dim}, we show that a CM $R$-module of finite G-dimension is always G-perfect, and the converse is also true when $R$ is CM. Moreover, we prove the following:
	
	\begin{customtheorem}{\ref{thm:G-perfect-and-type-formula}}
		If $M$ is a G-perfect $R$-module of G-dimension $g$, then
		\[
			\type(M) = \mu(\Ext_R^g(M,R)) \type(R),
		\]
		where $\mu(L)$ denotes the minimal number of generators of an $R$-module $L$.
	\end{customtheorem}

	As a consequence of Theorems~\ref{thm:CM-iff-G-perfect-for-finite-G-dim} and \ref{thm:G-perfect-and-type-formula}, we obtain the following characterization of Gorenstein local rings in terms of G-dimension of certain modules.
	
\begin{customcorollary}{\ref{cor:Gor-iff-there-is-CM-G-dim-finite-mod}}
	The  following statements are equivalent:
	\begin{enumerate}[\rm (1)]
		\item $R$ is Gorenstein.
		\item $R$ admits a nonzero CM module $M$ of finite G-dimension $g$ such that
		\begin{center}
			$\type(M) \le \mu( \Ext_R^g(M,R) )$ \quad {\rm (}e.g., $\type(M)=1${\rm )}.
		\end{center}
	\end{enumerate}
\end{customcorollary}
	
	In \cite[Thm.~2.3]{Ta04}, Takahashi showed that $R$ is Gorenstein if and only if there is a CM $R$-module of finite G-dimension and of type $1$. Corollary~\ref{cor:Gor-iff-there-is-CM-G-dim-finite-mod} considerably strengthens this result. It also provides a positive answer to the following question under the additional assumption that $\type(M) \le \mu( \Ext_R^g(M,R) )$, where $g = \gdim_R(M)$.

\begin{question}[Christensen]\cite[p.~40]{Cr}\label{ques:CM-G-dim-is-R-CM}
	If there exists a nonzero CM $R$-module $M$ of finite G-dimension, then is $R$ CM?
\end{question}

Question~\ref{ques:CM-G-dim-is-R-CM} has an affirmative answer when $\pd_R(M)$ is finite, which is a consequence of the intersection theorem, see, e.g., \cite[p.~40]{Cr}. The intersection theorem does not extend to G-dimension, as noted in \cite[(1.5.7)]{Cr}. However, there are some partial positive answers to Question~\ref{ques:CM-G-dim-is-R-CM}, see \cite[Thm.~2.3]{Ta04}, \cite[Cor.~4.4 and 4.5]{DMT14} and \cite[Thm.~3.6]{DMTT20}.

In view of the results in Theorem~\ref{thm:known-charac-via-inj-dim}, we are interested to find some classes of $R$-modules such that having finite injective dimensions of these modules ensure that $R$ is Gorenstein. In this theme, we prove Theorem~\ref{thm:injdim-Ext-finite-N-d-1}, which also says that a nonzero module of finite injective dimension can be used as a test module which detects the finiteness of projective dimension of a given module.

\begin{customtheorem}{\ref{thm:injdim-Ext-finite-N-d-1}}
	Let $N$ be a nonzero $R$-module such that $\depth(N) \ge \dim(R) - 1$ and $\id_R(N) < \infty$. For an $R$-module $M$, if $\id_R( \Ext_R^i(M,N) )$ is finite for all $i$, then $\pd_R(M)$ is finite.
\end{customtheorem}

As a consequence, we obtain the following characterization of Gorenstein local rings in terms of finite injective dimensions of certain modules.

\begin{customcorollary}{\ref{cor:injdim-Ext-finite-N-d-1-charac-Gor}}
	If there is a nonzero $R$-module $M$ of depth $\ge \dim(R) - 1$ such that the injective dimensions of $M$, $\Hom_R(M,M)$ and $\Ext_R^1(M,M)$ are finite, then $M$ has finite projective dimension and $R$ is Gorenstein.
\end{customcorollary}

(II)
In the 2nd part of this article, we investigate symmetry in vanishing of Ext and Tate cohomology, and their relations with the vanishing of Tor and Tate homology respectively. These were first studied over complete intersection rings by Avramov-Buchweitz \cite{AB00}. See \ref{Tate-co-homology} for the definition of Tate (co)homology. For $R$-modules $M$ and $N$, the vanishing of
\begin{center}
	$\Ext^{\gg 0}_R(M,N)$ (resp., $\Ext^{\ll 0}_R(M,N)$, $\Ext^{\star}_R(M,N)$)
\end{center}
means that $\Ext^i_R(M,N) = 0$ for all $i\gg 0$ (resp., $i\ll 0$, $i\in \bz$). When $R$ is a local complete intersection ring, Avramov-Buchweitz proved that the following are equivalent:\\
(1) $\Ext^{\gg 0}_R(M,N) = 0$, (2) $\Ext^{\gg 0}_R(N,M) = 0$, (3) $\Tor_{\gg 0}^R(M,N) = 0$,\\
(4) $\widehat{\Ext}_R^{\ll 0}(M,N) = 0$, (5) $\widehat{\Ext}_R^{\ll 0}(N,M) = 0$, (6) $\widehat{\Tor}_{\ll 0}^R(M, N) = 0$,\\
(7) $\widehat{\Ext}_R^{\star}(M,N) = 0$, (8) $\widehat{\Ext}_R^{\star}(N,M) = 0$, and (9) $\widehat{\Tor}_{\star}^R(M, N) = 0$,\\
see \cite[Thms.~4.7, 4.9 and 6.1]{AB00}. In \cite[Thm.~4.1]{HJ03}, Huneke-Jorgensen showed that the symmetry in vanishing of Ext (i.e., the equivalence of (1) and (2)) holds for a new class of rings, namely AB rings (\cite[Defn.~3.1]{HJ03}), which lie (strictly) between complete intersection and Gorenstein rings (cf.~\cite[Thm.~3.6]{HJ03} and \cite[pp.~471, Thm]{JS04}). However, the symmetry in vanishing of Ext does not hold for all Gorenstein rings, due to \cite[Thm.~1.2]{JS05}. In the present study, we show a different kind of symmetry in the vanishing of Ext over CM local rings with canonical modules, see Theorem~\ref{thm:Vanishing of Ext and Tor}. This considerably strengthens a result of Huneke-Jorgensen \cite[Thm.~2.1]{HJ03}, which was proved under the condition that $R$ is Gorenstein, and $M$, $N$ are MCM (maximal Cohen-Macaulay) $R$-modules.

\begin{customtheorem}{\ref{thm:Vanishing of Ext and Tor}}
	Let $R$ be a CM local ring with a canonical module $\omega$. Let $M$ and $N$ be CM $R$-modules. Set $M^{+} := \Ext_R^n(M,\omega)$, where $n=\dim(R)-\dim(M)$. Then the following are equivalent: {\rm (1)} $\Ext_R^{\gg 0}(M,N^{+}) = 0$, {\rm (2)} $\Ext_R^{\gg 0}(N,M^{+}) = 0$, and {\rm (3)} $\Tor_{\gg 0}^R(M,N) = 0$.
\end{customtheorem}

In the same direction, we also investigate symmetry in the vanishing of Tate cohomology over Gorenstein local rings.

\begin{customtheorem}{\ref{thm:Vanishing-Tate-coh}}
	Along with the hypotheses as in {\rm\ref{thm:Vanishing of Ext and Tor}}, further assume that $R$ is Gorenstein. Then the following are equivalent:\\
	{\rm (1)} $\widehat{\Ext}_R^{\ll 0}(M,N^{+}) = 0$, {\rm (2)} $\widehat{\Ext}_R^{\ll 0}(N,M^{+}) = 0$, and {\rm (3)} $\widehat{\Tor}_{\ll 0}^R(M, N) = 0$.
\end{customtheorem}

\section{A criterion for Gorenstein rings via G-dim-test modules}\label{sec:G-dim-test}

\begin{definition}\label{defn:G-dim-test} 
	An $R$-module $M$ is said to be a $\pd$-test (resp., $\gdim$-test) module provided that the following condition holds: Whenever $N$ is an $R$-module such that $\Tor_i^R(M,N)=0$ for all $i\gg 0$, one has that $\pd_R(N)<\infty$ (resp., $\gdim_R(N)<\infty$).
\end{definition}

If $M$ is a $\pd$-test module, then it is a $\gdim$-test module, but the converse is not necessarily true. There are $\gdim$-test modules that are not $\pd$-test modules; see, for example, \cite[3.11]{CS16}. As a class of $\pd$-test modules, we should note that if $I$ is an integrally closed ideal of $R$ such that $\depth(R/I) = 0$ (e.g., when $I$ is $\fm$-primary), then $I$ is a $\pd$-test (hence $\gdim$-test) $R$-module, \cite[2.3]{CS16}.

\begin{lemma}\label{modout}
	Let $M$ be a $\gdim$-test $R$-module. Let $x$ be a regular element over $R$ and $M$. Then $M/xM$ is a $\gdim$-test module over $R/xR$.
\end{lemma}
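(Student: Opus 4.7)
The plan is to verify the defining property of a $\gdim$-test module for the $\oR$-module $M/xM$, where $\oR := R/xR$. So suppose $N$ is an $\oR$-module with $\Tor_i^{\oR}(M/xM, N) = 0$ for all $i \gg 0$; I aim to show $\gdim_{\oR}(N) < \infty$.

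First I would lift the $\Tor$-vanishing from $\oR$ back up to $R$. Let $F_\bullet \to M$ be a free resolution over $R$. The $M$-regularity of $x$ gives $\Tor_i^R(M, \oR) = 0$ for $i \ge 1$ (via the short resolution $0 \to R \xrightarrow{x} R \to \oR \to 0$), so $F_\bullet \otimes_R \oR$ is a free resolution of $M/xM$ over $\oR$. Viewing $N$ as an $R$-module via $R \twoheadrightarrow \oR$, associativity of tensor products yields $(F_\bullet \otimes_R \oR) \otimes_{\oR} N \cong F_\bullet \otimes_R N$, and therefore
\[
\Tor_i^{\oR}(M/xM, N) \;\cong\; \Tor_i^R(M, N) \quad \text{for all } i \ge 0.
\]
In particular $\Tor_i^R(M, N) = 0$ for $i \gg 0$, and since $M$ is a $\gdim$-test $R$-module, this forces $\gdim_R(N) < \infty$.

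The second and final step is to descend this finiteness back to $\oR$. Since $x$ is $R$-regular and $xN = 0$, the standard change-of-rings principle for Gorenstein dimension (see, e.g., Christensen \cite{Cr}) gives the equality $\gdim_R(N) = \gdim_{\oR}(N) + 1$ whenever either side is finite; thus $\gdim_{\oR}(N) < \infty$, completing the argument. The only even mildly delicate point is invoking this change-of-rings formula for G-dimension, which is a well-known counterpart of the analogous projective-dimension statement; the $\Tor$ identification at the start is routine once the $M$-regularity of $x$ is exploited.
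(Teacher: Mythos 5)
Your proof is correct and follows essentially the same route as the paper's: both identify $\Tor_i^{\oR}(M/xM,N)$ with $\Tor_i^R(M,N)$ via the base-change isomorphism coming from the $M$-regularity of $x$, invoke the $\gdim$-test property over $R$, and then descend finiteness of G-dimension from $R$ to $\oR$ by the standard change-of-rings formula (cited in the paper as \cite[page 39]{Cr}). You simply spell out the Tor identification more explicitly than the paper does.
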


\begin{proof}
	Let $T$ be a module over $R/xR$. Let $ \Tor_i^{R/xR}(M/xM, T) = 0 $ for all $ i \gg 0 $. As $M$ is a $\gdim$-test $R$-module and $\Tor_i^R(M, T) \cong \Tor_i^{R/xR}(M/xM, T) = 0$ for all $i\gg 0$, we get that $\gdim_R(T)<\infty$. This implies $\gdim_{R/xR}(T)<\infty$; see, for example, \cite[page 39]{Cr}. Therefore, $M/xM$ is a $\gdim$-test module over $R/xR$.
\end{proof}

The following argument provides a different proof of \cite[4.4]{CS16} in the case when the ring is CM. Our proof is very short and elementary compare to that of \cite[4.4]{CS16}.

\begin{theorem}\label{thm:G-dim-test-Gor-charac}
	Let $R$ be a CM local ring, and let $M$ be a $\gdim$-test module over $R$. Assume that $ \Ext^i_R(M,R) = 0 $ for all $ i \gg 0 $. Then $R$ is Gorenstein.
\end{theorem}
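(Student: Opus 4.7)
The plan is to reduce to the case $\dim R = 0$ using Lemma~\ref{modout}, and then close via a Matlis duality argument applied to $E_R(k)$.

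First, I would replace $M$ by a sufficiently high syzygy in order to make it MCM. The class of $\gdim$-test modules is closed under syzygies because $\Tor_i^R(\Omega M, N) \cong \Tor_{i+1}^R(M, N)$ for $i \ge 1$, so eventual vanishing of $\Tor$ is inherited; and the Ext vanishing hypothesis $\Ext^i_R(M,R) = 0$ for $i \gg 0$ survives the shift. Since $R$ is CM of dimension $d$, one has $\depth_R(\Omega^d M) \ge d$ whenever $\Omega^d M \neq 0$, producing the MCM replacement. In the degenerate case $\Omega^d M = 0$, we get $\pd_R(M) < \infty$, so $\Tor_i^R(M,k) = 0$ for $i \gg 0$; the $\gdim$-test property then forces $\gdim_R(k) < \infty$, and $R$ is Gorenstein by Theorem~\ref{thm:known-charac-via-G-dim}(1).

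Assuming now $M$ is MCM, pick a maximal $R$-regular sequence $\mathbf{x} = x_1, \ldots, x_d$; it is automatically $M$-regular. Set $\oR := R/(\mathbf{x})$ and $\oM := M/\mathbf{x} M$. Iterating Lemma~\ref{modout} shows that $\oM$ is $\gdim$-test over the Artinian ring $\oR$. Since $\mathbf{x}$ is $M$-regular, a free $R$-resolution of $M$ reduces modulo $\mathbf{x}$ to a free $\oR$-resolution of $\oM$, giving the standard identification $\Ext^i_R(M, N) \cong \Ext^i_{\oR}(\oM, N)$ for every $\oR$-module $N$. Applying this with $N = R/(x_1, \ldots, x_k)$ and inducting on $k$ via the long exact Ext sequence attached to $0 \to R/(x_1,\ldots,x_{k-1}) \xrightarrow{x_k} R/(x_1,\ldots,x_{k-1}) \to R/(x_1,\ldots,x_k) \to 0$, the hypothesis $\Ext^i_R(M,R) = 0$ for $i \gg 0$ propagates to $\Ext^i_{\oR}(\oM, \oR) = 0$ for $i \gg 0$. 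Since $R$ is Gorenstein iff $\oR$ is Gorenstein, it now suffices to settle the Artinian statement.

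Let $\overline{E} := E_{\oR}(k)$, which is finitely generated over the Artinian ring $\oR$. Matlis duality gives $\Tor_i^{\oR}(\oM, \overline{E}) \cong \Ext^i_{\oR}(\oM, \oR)^{\vee}$, so $\Tor_i^{\oR}(\oM, \overline{E}) = 0$ for $i \gg 0$. Applying the $\gdim$-test property of $\oM$ to $\overline{E}$ yields $\gdim_{\oR}(\overline{E}) < \infty$, and the Auslander--Bridger equality in dimension zero forces $\gdim_{\oR}(\overline{E}) = 0$. Thus $\overline{E}$ is totally reflexive; dualizing a free presentation of $\overline{E}^{*}$ realizes $\overline{E} \cong \overline{E}^{**}$ as a submodule of a finite free $\oR$-module. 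Because $\overline{E}$ is injective, this embedding splits, making $\overline{E}$ a direct summand of a free module over the local ring $\oR$, hence free; a length comparison then gives $\overline{E} \cong \oR$, so $\oR$ is self-injective and therefore Gorenstein. The main obstacle is the clean transfer to $\oR$ of both the $\gdim$-test property and the Ext vanishing; once that reduction is in place, the short Matlis duality step does the rest.
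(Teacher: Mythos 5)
Your reduction to the Artinian case is exactly the paper's: pass to a high syzygy, reduce modulo a regular element via Lemma~\ref{modout}, transfer the Ext-vanishing along the way, and then in dimension zero apply Matlis duality to turn $\Ext^i_{\oR}(\oM,\oR)=0$ into $\Tor_i^{\oR}(\oM,\oE)=0$, whence $\gdim_{\oR}(\oE)<\infty$. The two arguments diverge only at the very last step: the paper concludes that $\oR$ is Gorenstein by invoking \cite[1.2]{JL} (finite $\gdim$ of the canonical module forces Gorenstein), whereas you give a self-contained elementary argument --- the Auslander--Bridger formula forces $\gdim_{\oR}(\oE)=0$, totally reflexive modules embed into free modules by dualizing a presentation of $\oE^*$, injectivity of $\oE$ splits that embedding, so $\oE$ is free, and a length count gives $\oE\cong\oR$, i.e.\ $\oR$ is self-injective. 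Your version is arguably preferable since it avoids the external citation and is completely transparent; you also patch a small gap the paper glosses over by treating the degenerate case $\Omega^d M=0$ separately (there $\pd_R(M)<\infty$, so $\Tor^R_{\gg 0}(M,k)=0$ and the $\gdim$-test hypothesis applied to $k$ gives Gorensteinness directly). Both proofs are correct; yours trades one citation for a short direct argument.
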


\begin{proof} 
	Set $d:=\depth(R)$. First assume $d>0$. Replacing $M$ with a high syzygy, we may assume that $M$ is MCM. Pick an element $x \in R$ that is regular on both $R$ and $M$. Then it follows that $\Ext^{i}_{R/xR}(M/xM, R/xR)=0$ for all $i\gg 0$. As $R/xR$ is CM, and $M/xM$ is a $\gdim$-test $R/xR$-module (Lemma~\ref{modout}), we may proceed and reduce to the case where $R$ is Artinian. So assume $d=0$, i.e., $R$ is Artinian.
	
	Set $(-)^{\vee}:=\Hom_R(-,E)$, where $E$ is the injective hull of the residue field of $R$. Then $\Tor_i^R(M,E)^{\vee} \cong \Ext^i_R(M,E^{\vee}) \cong \Ext^i_R(M,R)$ for all $i\ge 0$. So it follows by our hypothesis that $\Tor_i^R(M,E)^{\vee}$, and hence $\Tor_i^R(M,E)$, vanishes for all $i\gg 0$. So $\gdim_R(E)<\infty$. This implies that $R$ is Gorenstein; see, e.g., \cite[1.2]{JL}.
\end{proof}

\section{Criteria for Gorenstein rings via G-dimension}\label{sec:charac-Gor-via-G-dim}

\begin{para}
	For an $R$-module $ M $ of depth $t$, the type of $ M $, denoted by $ \type(M) $, is defined to be the $t$th Bass number $ \dim_k\left( \Ext_R^t(k,M) \right) $.	The minimal number of generators of $M$ is denoted by $\mu(M)$, i.e., $\mu(M) := \dim_k(M\otimes_R k)$. Set $M^* := \Hom_R(M,R)$. We denote $ M^{\dagger} := \Ext_R^g(M,R) $, when $g=\gdim_R(M) < \infty$.
\end{para}

\begin{definition}\label{defn:grade-and-G-perfect}
	\begin{enumerate}[\rm (1)]
		\item The grade of $M$ is defined to be
		\begin{center}
			$\grade(M) = \min\{ i : \Ext_R^i(M,R) \neq 0 \}$.
		\end{center}
		\item \cite[(4.34)]{AB69} A nonzero $R$-module $M$ is called G-perfect if
		\begin{center}
			$ \gdim_R(M) = \grade(M)$,
		\end{center}
		equivalently, if
		\[
			\Ext_R^n(M,R) = 0 \mbox{ for all } n\neq g, \;\mbox{ where } g = \gdim_R(M) < \infty.
		\]
	\end{enumerate}
\end{definition}

A nonzero $R$-module $M$ is called perfect if $\pd_R(M) = \grade(M)$. Every perfect module is G-perfect, but the converse is not true in general. Indeed, every nonzero module of $G$-dimension $0$ is always G-perfect, which need not be perfect.

If $R$ is CM, and $M$ is an $R$-module of finite projective dimension, then it is well known that $M$ is CM if and only if $M$ is perfect, see \cite[2.1.5]{BH93}. This result is extended further to $G$-perfect modules by Golod \cite[p.63, 9.]{Go84}. We show that the `only if' part is true over any local ring. Note  that the `if' part is not true in general. For example, $M=R$ is always a perfect module, which need not be CM.

\begin{theorem}\label{thm:CM-iff-G-perfect-for-finite-G-dim}
	Let $M$ be a nonzero $R$-module of finite G-dimension. Then
	\begin{center}
		$M$ is CM \quad $ \Longrightarrow $ \quad $M$ is G-perfect.
	\end{center}
	The converse holds true when $R$ is CM.
\end{theorem}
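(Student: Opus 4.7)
The plan is to exploit the Auslander--Bridger formula $\gdim_R(M) = \depth(R) - \depth(M)$, which holds since $g := \gdim_R(M) < \infty$. The converse direction (assuming $R$ is CM) is quick: the classical grade formula $\grade(M) = \depth(R) - \dim(M)$ is valid over a CM local ring, and G-perfectness says $g = \grade(M)$, giving $\depth(R) - \dim(M) = g = \depth(R) - \depth(M)$. Hence $\depth(M) = \dim(M)$ and $M$ is CM.

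For the forward direction (supposed to hold over \emph{any} local ring $R$), I would induct on $d := \dim(M)$. When $d = 0$, the module $M$ has finite length, so $\sqrt{\ann(M)} = \fm$; this forces $\grade(M) = \grade(\fm, R) = \depth(R)$, while Auslander--Bridger gives $g = \depth(R) - 0 = \depth(R)$. Thus $\grade(M) = g$ and $M$ is G-perfect.

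For the inductive step $d \ge 1$, since $M$ is CM, $\depth(M) = d \ge 1$, so one can pick $x \in \fm$ that is $M$-regular. Then $M/xM$ is nonzero and CM of dimension $d-1$; the short exact sequence $0 \to M \xrightarrow{\ x\ } M \to M/xM \to 0$ forces $\gdim_R(M/xM) \le g + 1 < \infty$, and Auslander--Bridger pins this down to $\gdim_R(M/xM) = \depth(R) - (d-1) = g + 1$. By the inductive hypothesis $M/xM$ is G-perfect, hence $\Ext_R^i(M/xM, R) = 0$ for all $i < g+1$. Feeding this vanishing into the long exact sequence of $\Ext_R(-,R)$ coming from the short exact sequence above, multiplication by $x$ on $\Ext_R^i(M, R)$ is surjective whenever $i \le g - 1$. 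Since every $\Ext_R^i(M, R)$ is finitely generated and $x \in \fm$, Nakayama's lemma yields $\Ext_R^i(M, R) = 0$ for all $i < g$, and $M$ is G-perfect.

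There is no serious obstacle here: the only subtle bookkeeping is tracking how $\gdim_R$ and $\dim$ shift by exactly one after quotienting by an $M$-regular element, and both shifts are routine from the short exact sequence and Auslander--Bridger. The conceptual crux is the Nakayama-on-$\Ext$ trick driven by the long exact sequence, which propagates G-perfectness through the induction on $\dim(M)$ without ever requiring $R$ itself to be Cohen--Macaulay.
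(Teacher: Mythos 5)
Your proof is correct and follows essentially the same strategy as the paper's: induction on $\dim(M)$ with the same base case ($\Supp(M)=\{\fm\}$ gives $\grade(M)=\depth(R)=g$), the same inductive step (pass to $M/xM$, feed its Ext-vanishing into the long exact sequence, kill $\Ext_R^i(M,R)$ by Nakayama), and the same converse via the grade formula $\grade(M)=\dim(R)-\dim(M)$ over CM rings. The only cosmetic difference is that you verify $\Ext_R^i(M,R)=0$ only for $i<g$, implicitly invoking the automatic vanishing for $i>g$ from $\gdim_R(M)=g$, whereas the paper's Nakayama argument covers all $i\neq g$ uniformly; both are fine.
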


\begin{proof}
	Let $M$ be a CM $R$-module of finite G-dimension. Set $d := \depth(R)$, $s := \dim(M)$, $g := \gdim_R(M)$ and $ h := \grade(M) $. By the Auslander-Bridger Formula \cite[(4.13).(b)]{AB69}, $g =\gdim_R(M) = \depth(R) - \depth(M) = d-s$ as $M$ is CM. In order to prove that $M$ is G-perfect, i.e., $g=h$, we use induction on $s$.
	
	In the base case, assume that $s=0$. Then $M$ has finite length, and $\Supp(M) = \{\fm\}$. It follows that $\depth(R) = \min\{ i : \Ext_R^i(M,R) \neq 0 \} = \grade(M)$, see, e.g., \cite[1.2.10.(e)]{BH93}. Hence $g=d=h$.
	
	Next assume that $s\ge 1$. Since $M$ is CM, there exists an $M$-regular element $x$. Then $M/xM$ is a CM $R$-module of dimension $s-1$. Moreover, $\gdim_R(M/xM) = \gdim_R(M) + 1 = g+1$ is finite. Therefore, by the induction hypothesis, $M/xM$ is a G-perfect $R$-module of G-dimension $g+1$. Hence
	\begin{equation}\label{Ext-M/xM-zero}
	\Ext_R^i(M/xM, R) = 0 \; \mbox{ for all } i \neq g+1.
	\end{equation}
	Considering the short exact sequence $0 \to M \stackrel{x}{\rightarrow} M \rightarrow M/xM \to 0$, we have an exact sequence
	\begin{equation}\label{exact-seq-on-Ext}
	\Ext_R^i(M, R) \stackrel{x}{\longrightarrow} \Ext_R^i(M, R) \longrightarrow \Ext_R^{i+1}(M/xM, R)\; \mbox{ for every } i.
	\end{equation}
	So it follows from \eqref{Ext-M/xM-zero}, \eqref{exact-seq-on-Ext} and the Nakayama's Lemma that $\Ext_R^i(M, R) = 0$ for all $ i \neq g$. Thus $M$ is G-perfect.
	
	The converse part can be proved similarly as in the case of perfect modules, see the proof of \cite[2.1.5]{BH93}.  Since $M$ is G-perfect and $R$ is CM, $\grade(M) = \gdim_R(M) = \dim(R) - \depth(M) $. On the other hand,
	\begin{center}
		$\grade(M) = \depth(\ann_R(M),R) = \dim(R) - \dim(M)$,
	\end{center}
	see, e.g., \cite[1.2.10.(e) and 2.1.4]{BH93}. Hence $\depth(M) = \dim(M)$, i.e., $M$ is CM.
\end{proof}

\begin{remark}
	In \cite{Fo75}, Foxby showed that if $R$ is CM, then an $R$-module $M$ of grade $g$ is G-perfect \iff $M$ is CM and $\Ext_R^g(M,R)$ is G-perfect. Avramov and Martsinkovsky in \cite[6.3]{AM02} proved that if an $R$-module $M$ is G-perfect of grade $g$, then $\Ext_R^g(M,R)$ is also so. The main implication in Theorem~\ref{thm:CM-iff-G-perfect-for-finite-G-dim} does not follow from these results. Rather it is a variation of Foxby's result.
\end{remark}
%

The following theorem shows that the type of a G-perfect $R$-module is a multiple of the type of $R$.

\begin{theorem}\label{thm:G-perfect-and-type-formula}
	If $M$ is a G-perfect $R$-module of G-dimension $g$, then
	\[
		\type(M) = \mu(\Ext_R^g(M,R)) \type(R).
	\]
\end{theorem}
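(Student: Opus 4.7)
The plan is to exploit the tensor--hom double-complex spectral sequences (in the spirit of the paper) together with a self-duality of G-perfect modules. The key preliminary is that if $M$ is G-perfect of G-dimension $g$, then $M^{\dagger}$ is again G-perfect of G-dimension $g$ and the biduality $\Ext^{g}_{R}(M^{\dagger},R) \cong M$ holds. This is standard: take a resolution $0 \to G_{g} \to P_{g-1} \to \cdots \to P_{0} \to M \to 0$ with $P_{i}$ projective and $G_{g}$ totally reflexive; apply $\Hom_{R}(-,R)$ and use $\Ext^{i}_{R}(M,R)=0$ for $i \ne g$ to obtain an exact length-$g$ sequence of totally reflexive modules ending in $M^{\dagger}$; apply $\Hom_{R}(-,R)$ a second time to recover the original resolution in cohomological degree $g$, which forces $\Ext^{i}_{R}(M^{\dagger},R) = 0$ for $i<g$ and $\Ext^{g}_{R}(M^{\dagger},R) \cong M$. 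Along the way, the Auslander--Bridger formula gives $\depth(M) = r - g$ with $r := \depth(R)$, so $\type(M) = \dim_{k}\Ext^{r-g}_{R}(k,M)$.

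Now choose minimal free resolutions $F_{\bullet} \to k$ and $Q_{\bullet} \to M^{\dagger}$, and form the first-quadrant double complex
\[
  C^{p,q} \;=\; \Hom_{R}\!\left(F_{p}\otimes_{R}Q_{q},\, R\right) \;\cong\; \Hom_{R}\!\left(F_{p},\,\Hom_{R}(Q_{q},R)\right).
\]
Two spectral sequences converge to $H^{n}(\Tot(C))$. Taking vertical cohomology first gives $\Hom_{R}(F_{p},\Ext^{q}_{R}(M^{\dagger},R))$, which by the biduality vanishes for $q \ne g$ and equals $\Hom_{R}(F_{p},M)$ for $q=g$; the horizontal cohomology at the next page is then $\Ext^{p}_{R}(k,M)$ at $q=g$ and zero elsewhere, so the sequence collapses to $H^{n}(\Tot(C)) \cong \Ext^{n-g}_{R}(k,M)$. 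Taking horizontal cohomology first instead, and using that $\Hom_{R}(Q_{q},R)$ is free, gives $\Ext^{p}_{R}(k,\Hom_{R}(Q_{q},R)) \cong \Hom_{R}(Q_{q},\Ext^{p}_{R}(k,R))$; the vertical cohomology afterwards yields the $E_{2}$-term $\Ext^{q}_{R}(M^{\dagger},\Ext^{p}_{R}(k,R))$ for the second spectral sequence.

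Finally, specialize to total degree $n = r$. The first spectral sequence gives $\dim_{k} H^{r}(\Tot(C)) = \type(M)$. For the second, the defining vanishing $\Ext^{p}_{R}(k,R) = 0$ for $p < r$ forces the $E_{2}$-term $\Ext^{q}_{R}(M^{\dagger},\Ext^{p}_{R}(k,R))$ to vanish whenever $p<r$, so on the antidiagonal $p+q=r$ only the corner $(p,q)=(r,0)$ can be nonzero. Every differential $d_{s}$ into or out of $(r,0)$ lands in the forbidden region ($p<r$ for incoming, $q<0$ for outgoing) and hence vanishes, so $E_{\infty}^{r,0} = E_{2}^{r,0}$. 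Since $\Ext^{r}_{R}(k,R)$ is killed by $\fm$ and is a $k$-vector space of dimension $\type(R)$, one computes
\[
  E_{2}^{r,0} \;=\; \Hom_{R}\!\left(M^{\dagger},\,\Ext^{r}_{R}(k,R)\right) \;=\; \Hom_{k}\!\left(M^{\dagger}\otimes_{R}k,\, k^{\type(R)}\right) \;\cong\; k^{\mu(M^{\dagger})\cdot\type(R)}.
\]
Comparing the two computations of $H^{r}(\Tot(C))$ yields the desired equality. The main technical ingredient to get right is the biduality $\Ext^{g}_{R}(M^{\dagger},R) \cong M$ for G-perfect modules; once that is in place, everything else reduces to a routine degeneration analysis of two first-quadrant bicomplex spectral sequences.
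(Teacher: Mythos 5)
Your proof is correct, and it proves the theorem by a spectral-sequence argument that is conceptually parallel to the paper's but runs through a different double complex. The paper applies the tensor-hom bicomplex $\Hom_R(\mathbb{P}_k,\Hom_R(M,\mathbb{I}_R))\cong\Hom_R(\mathbb{P}_k\otimes_R M,\mathbb{I}_R)$, built from a projective resolution of $k$, the module $M$ itself, and an injective resolution of $R$; its two $E_2$-pages are $\Ext_R^p(k,\Ext_R^q(M,R))$ and $\Ext_R^q(\Tor_p^R(k,M),R)$, the latter vanishing for $q<\depth R$ because each $\Tor_p^R(k,M)$ is a $k$-vector space. Comparing total cohomology in degree $\depth R$ yields $\type(M^{\dagger})=\mu(M)\type(R)$, and the paper concludes by citing Avramov--Martsinkovsky biduality ($M^{\dagger}$ is again G-perfect of the same G-dimension and $M^{\dagger\dagger}\cong M$) to swap $M$ and $M^{\dagger}$ at the very end. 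You instead take two free resolutions $F\to k$ and $Q\to M^{\dagger}$ and form $\Hom_R(F\otimes_R Q,R)$, whose $E_2$-pages are $\Ext_R^p(k,\Ext_R^q(M^{\dagger},R))$ and $\Ext_R^q(M^{\dagger},\Ext_R^p(k,R))$, the latter vanishing for $p<\depth R$ simply because $\Ext_R^p(k,R)=0$ there. You then invoke the same biduality inside the first spectral sequence (to identify its collapsed page with $\Ext_R^{\bullet}(k,M)$), obtaining $\type(M)=\mu(M^{\dagger})\type(R)$ in one pass. Both arguments rest on exactly the same biduality input; what your version buys is that it avoids injective resolutions altogether and never needs the observation that $\Tor_p^R(k,M)$ is a $k$-module. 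One cosmetic slip worth flagging: with the convention $d_s^{p,q}\colon E_s^{p,q}\to E_s^{p-s+1,q+s}$ (the one the paper states for its second spectral sequence), the differentials \emph{out} of $(r,0)$ land in the region $p<r$ and those \emph{into} $(r,0)$ originate in $q<0$; you have these two labels interchanged, but since both regions are forbidden the degeneration $E_\infty^{r,0}=E_2^{r,0}$ holds either way.
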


In order to prove Theorem~\ref{thm:G-perfect-and-type-formula}, we use spectral sequences.

\begin{para}\label{spec-seq}
	Let $L$, $M$ and $N$ be $R$-modules. Let $\mathbb{P}_L$ be a projective resolution of $L$, and $\mathbb{I}_N$ be an injective resolution of $N$. The tensor-hom adjunction yields an isomorphism between the double complexes:
	\begin{equation}\label{eqn:double-complexes-X-Y}
	\mathbb{X} := \Hom_R(\mathbb{P}_L, \Hom_R(M,\mathbb{I}_N)) \cong \Hom_R(\mathbb{P}_L \otimes_R M, \mathbb{I}_N) =: \mathbb{Y}.
	\end{equation}
	Since $\Hom_R(\mathbb{P}_L, - )$ is an exact functor, computing cohomology of $\mathbb{X}$, first  vertically, and then horizontally, we obtain the first two pages of the spectral sequence
	\begin{align}
	{}^vE_1^{p,q}(\mathbb{X}) &= \Hom_R\left(P_p, \Ext_R^q(M,N)\right) \quad \mbox{and} \nonumber\\ {}^vE_2^{p,q}(\mathbb{X}) &= \Ext_R^p\left(L, \Ext_R^q(M,N)\right) \mbox{ for all } p,q.\label{spec-seq-X}
	\end{align}
	On the other hand, since $\Hom_R( - , \mathbb{I}_N)$ is an exact functor, computing cohomology of $\mathbb{Y}$, first horizontally, and then vertically, we get another spectral sequence
	\begin{align}
	{}^hE_1^{p,q}(\mathbb{Y}) &= \Hom_R\left( \Tor_p^R(L,M), I^q \right) \quad \mbox{and} \nonumber\\ {}^hE_2^{p,q}(\mathbb{Y}) &= \Ext_R^q\left( \Tor_p^R(L,M), N \right) \mbox{ for all } p,q.  \label{spec-seq-Y}
	\end{align}
	Since $\mathbb{X}\cong \mathbb{Y}$, their total complexes are isomorphic, and hence their cohomologies are also so, i.e.,
	\begin{equation}\label{coh-Tot-X-Y-iso}
	H^n(\Tot(\mathbb{X})) \cong H^n(\Tot(\mathbb{Y})) \; \mbox{ for all } n.
	\end{equation}
\end{para}

Now we are in a position to prove Theorem~\ref{thm:G-perfect-and-type-formula}.

\begin{proof}[Proof of Theorem~\ref{thm:G-perfect-and-type-formula}]
	Consider the spectral sequences discussed in \ref{spec-seq} for $L=k$ and $N=R$. Denote $\tau(-) := \type(-)$. Let $g = \gdim_R(M)$. Since $M$ is G-perfect, $g$ is finite and $\Ext_R^q(M,R) = 0$  for all $q \neq g$. So the spectral sequence \eqref{spec-seq-X}
	\[
		^vE_2^{p,q}(\mathbb{X}) = \Ext_R^p\left(k, \Ext_R^q(M,R)\right)
	\]
	collapses on the line $q=g$ at the second page. Hence
	\begin{equation*}
		H^n(\Tot(\mathbb{X})) \cong {}^vE_2^{n-g,g}(\mathbb{X}) = \Ext_R^{n-g}\left(k, \Ext_R^g(M,R)\right) \; \mbox{ for all } n,
	\end{equation*}
	see, e.g., \cite[5.2.7]{We94}. Set $s := \depth(M^{\dagger})$. Denoting $ M^{\dagger} = \Ext_R^g(M,R) $,
	\begin{align}\label{coh-Tot-X}
	&H^{s+g}(\Tot(\mathbb{X})) \cong \Ext_R^s(k, M^{\dagger} ) \cong k^{\tau(M^{\dagger})} \neq 0 \quad \mbox{and}\\
	&H^n(\Tot(\mathbb{X})) = 0 \;\mbox{ for all } n < s+g.\nonumber
	\end{align}
	Set $d:=\depth(R)$. Since $\Tor_p^R(k,M)$ is a free $k$-module, the other spectral sequence \eqref{spec-seq-Y} satisfies that
	\begin{equation*}
	{}^hE_2^{p,q}(\mathbb{Y}) = \Ext_R^q\left( \Tor_p^R(k,M), R \right)= 0 \; \mbox{ for all } p <0 \,\mbox{ or }\, q < d.
	\end{equation*}
	Thus, for $p<0$ or $q<d$, ${}^hE_r^{p,q}=0$ for all $r \ge 2$, which imply that ${}^hE_{\infty}^{p,q}=0$. Moreover, since the differentials in the $r$th page are given by ${}^h d_r^{p,q} :{}^hE_r^{p,q} \longrightarrow {}^hE_r^{p-r+1,q+r}$, it can be noticed that
	\begin{center}
		$ {}^h d_r^{r-1,d-r}  : {}^h E_r^{r-1,d-r} \longrightarrow {}^h E_r^{0,d}$ \; and \; ${}^h d_r^{0,d} : {}^h E_r^{0,d} \longrightarrow {}^h E_r^{-r+1, d+r} $
	\end{center}
	are the zero maps for all $r\ge 2$, which yield that ${}^hE_{\infty}^{0,d} = {}^hE_2^{0,d}$. Thus  ${}^hE_{\infty}^{0,d}$ is the only possible nonzero term on the line $p+q=d$ in the abutment of the spectral sequence ${}^hE_r^{p,q}(\mathbb{Y})$, while all the terms on the line $p+q=n$ for $n<d$ are zero. So, in view of the filtration of $H^n(\Tot(\mathbb{Y}))$ (cf.~\cite[5.2.5 and 5.2.6]{We94}), one obtains that
	\begin{align}\label{coh-Tot-Y}
	H^d(\Tot(\mathbb{Y})) & \cong {}^hE_{\infty}^{0,d}(\mathbb{Y}) = {}^hE_2^{0,d}(\mathbb{Y}) \\
	& = \Ext_R^d \left( \Tor_0^R(k,M), R \right) \cong k^{\mu(M)\tau(R)} \neq 0 \quad \mbox{and} \nonumber\\
	H^n(\Tot(\mathbb{Y})) & = 0 \; \mbox{ for all } \; n<d.\nonumber
	\end{align}
	It follows from \eqref{coh-Tot-X-Y-iso}, \eqref{coh-Tot-X} and \eqref{coh-Tot-Y} that for a G-perfect $R$-module $M$, 
	\begin{equation}\label{equality-type-mu}
	\tau(M^{\dagger}) = \mu(M)\tau(R).
	\end{equation}
	By \cite[Thm.~6.3]{AM02}, since $M^{\dagger}$ is also a G-perfect $R$-module (of grade $g$) and $M \cong M^{\dagger\dagger}$, applying \eqref{equality-type-mu} for $M^{\dagger}$, one derives that $\tau(M) = \tau(M^{\dagger\dagger}) = \mu(M^{\dagger})\tau(R)$, which completes the proof.
\end{proof}

A module which is not G-perfect may also satisfy the equality in Theorem~\ref{thm:G-perfect-and-type-formula}.

\begin{example}\label{exam:Hom-m-omega}
	Let $(R,\fm,k)$ be a Gorenstein local ring of dimension $d \ge 2$. Then the exact sequence $0\to \fm \to R \to k \to 0$ induces that $\Hom_R(\fm,R) \cong R$ and
	\[
	\Ext_R^i(\fm,R) \cong \Ext_R^{i+1}(k,R) \cong \left\{\begin{array}{ll}
	0 & \mbox{if } i \ge 1 \mbox{ and } i \neq d-1,\\
	k \neq 0 & \mbox{if } i = d-1.
	\end{array}\right.
	\]
	Therefore $\fm$ is not a G-perfect $R$-module, $\gdim_R(\fm) = d-1$ and $\depth(\fm) = 1$. Since $\Ext^1_R(k,\fm) \cong \Hom_R(k,k) \cong k $, $\type(\fm) = 1 = \mu(\Ext^{d-1}_R(\fm,R))\type(R)$.
\end{example}

There are examples of CM (non-free) $R$-modules $M$ of finite G-dimension $g$ such that $\type(M) = \mu( \Ext_R^g(M,R) )$, where $\type(M)$ can be arbitrarily large.

\begin{example}\label{exam:G-perfect-non-free-modules}
	Let $R = k[X_1,\ldots,X_n]/\langle X_1^2,\ldots,X_n^2 \rangle$ over a field $k$, where $n \ge 2$. Clearly, $R=k[x_1,\ldots,x_n]$ is a Gorenstein Artinian local ring with the maximal ideal $\fm = \langle x_1,\ldots,x_n\rangle$. So every $R$-module is CM and of G-dimension $0$ (hence G-perfect). Set $I := \langle x_i x_j : 1 \le i < j \le n \rangle$ and $M := R/I$. Note that $M$ is non-free, and $\Soc(M) =  (0 :_M \fm) = kx_1\oplus \cdots \oplus k x_n$. Thus $\type(M) = \dim_k(\Soc(M)) = n$. On the other hand, $\Hom_R(M,R) \cong (0 :_R I)$ is given by
	\begin{equation}
	K\mbox{-Span}\{ x_1 x_2\cdots x_n, \;\, x_1 \cdots x_{i-1} \widehat{x_i} x_{i+1} \cdots x_n : 1 \le i \le n \}.
	\end{equation}
	So $\mu(\Ext_R^0(M,R)) = n$. Since $\type(R) = 1$, the equality
	\begin{center}
		$\type(M) = \mu(\Ext_R^0(M,R)) \type(R)$
	\end{center}
	holds true as in Theorem~\ref{thm:G-perfect-and-type-formula}.
\end{example}

In view of Theorem~\ref{thm:G-perfect-and-type-formula} and Example~\ref{exam:Hom-m-omega}, the following natural question arises.

\begin{question}
	Does there exist an $R$-module $M$ of finite G-dimension $g$ such that $\type(M) \neq \mu(\Ext_R^g(M,R)) \type(R)$?
\end{question}

As consequences of Theorems~\ref{thm:CM-iff-G-perfect-for-finite-G-dim} and \ref{thm:G-perfect-and-type-formula}, we obtain the following characterizations of Gorenstein local rings in terms of the existence of certain modules of finite G-dimension.

\begin{corollary}\label{cor:Gor-iff-there-is-CM-G-dim-finite-mod}
	The  following statements are equivalent:
	\begin{enumerate}[\rm (1)]
		\item $R$ is Gorenstein.
		\item $R$ admits a nonzero CM module $M$ of finite G-dimension $g$ such that
		\begin{center}
			$\type(M) \le \mu( \Ext_R^g(M,R) )$ \quad {\rm (}e.g., $\type(M)=1${\rm )}.
		\end{center}
		\item $R$ is CM, and it admits a G-perfect module $M$ of G-dimension $g$ such that $\type(M) \le \mu( \Ext_R^g(M,R) )$.
	\end{enumerate}
\end{corollary}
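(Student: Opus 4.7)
My plan is to prove the cycle $(1) \Rightarrow (3) \Rightarrow (2) \Rightarrow (1)$, invoking Theorems \ref{thm:CM-iff-G-perfect-for-finite-G-dim} and \ref{thm:G-perfect-and-type-formula} at each step. For $(1) \Rightarrow (3)$, I will take $M = R$: since $R$ is Gorenstein (hence CM), the module $R$ is G-perfect of G-dimension $0$ with $\type(R) = 1 = \mu(R) = \mu(\Ext_R^0(R, R))$. For $(3) \Rightarrow (2)$, I will use the converse direction of Theorem \ref{thm:CM-iff-G-perfect-for-finite-G-dim}: over a CM ring, any G-perfect module of finite G-dimension is CM, so the module $M$ in (3) automatically fulfills (2).

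The substantive step will be $(2) \Rightarrow (1)$. Given a nonzero CM module $M$ of finite G-dimension $g$ satisfying the inequality in (2), Theorem \ref{thm:CM-iff-G-perfect-for-finite-G-dim} first gives that $M$ is G-perfect. Theorem \ref{thm:G-perfect-and-type-formula} then yields
\[
\type(M) = \mu(\Ext_R^g(M, R)) \cdot \type(R).
\]
Since $\Ext_R^g(M, R) \neq 0$ by G-perfectness, $\mu(\Ext_R^g(M, R)) \geq 1$. Combining this with the hypothesis and the general bound $\type(R) \geq 1$ will force $\type(R) = 1$. It will then remain to show $R$ is CM, after which the classical identity ``Gorenstein $\Leftrightarrow$ CM with type $1$'' completes the proof.

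Establishing that $R$ is CM is the main obstacle, and this step refines Christensen's open question (Question \ref{ques:CM-G-dim-is-R-CM}). My approach will be to replace $M$ by $M/\ux M$ for a maximal sequence $\ux$ that is regular on both $R$ and $M$. The Auslander-Bridger equality $\depth(R) = g + \dim(M)$ guarantees the existence of such a sequence of length $\dim(M)$, after which the problem reduces to the case $\dim(M) = 0$; iterating on $R$-regular elements then reduces further to $\depth(R) = 0$. In this base situation, $M$ is a finite-length totally reflexive module over a ring of depth zero and type one, and the embedding $M \hookrightarrow R^n$ coming from a complete resolution gives $\Ass(M) \subseteq \Ass(R)$. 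The hard part will be to use the structural constraint $\type(M) \le \mu(\Hom_R(M, R))$ together with $\type(R) = 1$ to conclude that $R$ is Artinian, thereby strengthening Takahashi's original argument, which handled only $\type(M) = 1$.
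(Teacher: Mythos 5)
Your overall skeleton is sound: the cycle $(1)\Rightarrow(3)\Rightarrow(2)\Rightarrow(1)$ works, $(1)\Rightarrow(3)$ via $M=R$ and $(3)\Rightarrow(2)$ via the converse direction of Theorem~\ref{thm:CM-iff-G-perfect-for-finite-G-dim} are both correct, and your derivation of $\type(R)=1$ in $(2)\Rightarrow(1)$ from Theorems~\ref{thm:CM-iff-G-perfect-for-finite-G-dim} and \ref{thm:G-perfect-and-type-formula} is exactly what the paper does. Where you diverge is in the final and most delicate step: establishing that $R$ is Cohen--Macaulay. The paper does not prove this from scratch; it cites Takahashi's result \cite[Thm.~2.3.(i)$\Leftrightarrow$(iv)]{Ta04}, which already says that if $R$ has type $1$ and admits a nonzero CM module of finite G-dimension, then $R$ is Gorenstein (in particular CM). That citation absorbs the entire difficulty you flag.

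Your plan instead attempts to re-derive $R$ being CM by a reduction to an Artinian base case, and here there is a genuine gap. Even setting aside the mechanics of the reduction (after $\dim(M)=0$ you cannot choose an element regular on both $R$ and $M$; you must pass to $R/xR$ for some $x\in\ann_R(M)$ that is $R$-regular, using the shift $\gdim_{R/xR}(M)=\gdim_R(M)-1$), the crucial closing step is missing. You explicitly write that ``the hard part will be to use the structural constraint $\type(M)\le \mu(\Hom_R(M,R))$ together with $\type(R)=1$ to conclude that $R$ is Artinian,'' but you do not supply that argument; the inclusion $\Ass(M)\subseteq\Ass(R)$ coming from an embedding $M\hookrightarrow R^n$ is far from forcing $\dim R=0$, and you give no route from there to the conclusion. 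As it stands, the proof of $(2)\Rightarrow(1)$ is incomplete, whereas the paper's proof is complete precisely because it outsources this step to Takahashi. Either complete the reduction argument in full, or, as the paper does, invoke \cite[Thm.~2.3.(i)$\Leftrightarrow$(iv)]{Ta04} once you know $\type(R)=1$ and that a nonzero CM module of finite G-dimension exists.
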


\begin{proof}
	(1) $\Rightarrow$ (2) and (1) $\Rightarrow$ (3): Since $R$ is Gorenstein, it is CM of type $1$, see, e.g., \cite[3.2.10]{BH93}. Note that $M = R$ satisfies the conditions (2) and (3).
	
	(2) $\Rightarrow$ (1):
	By virtue of Theorem~\ref{thm:CM-iff-G-perfect-for-finite-G-dim}, $M$ is G-perfect. So Theorem~\ref{thm:G-perfect-and-type-formula} yields that $\type(M) = \mu(M^{\dagger}) \type(R)$. Since $\type(M) \le \mu(M^{\dagger})$, it follows that $R$ has type $1$. Therefore, by \cite[Thm.~2.3.(i)$\Leftrightarrow$(iv)]{Ta04}, $R$ is Gorenstein.
	
	(3) $\Rightarrow$ (1):
	From the given inequality, by Theorem~\ref{thm:G-perfect-and-type-formula}, it follows that $\type(R)=1$. Therefore, since $R$ is CM, $R$ must be Gorenstein, see, e.g., \cite[3.2.10]{BH93}.
\end{proof}

\begin{remark}\label{rmk:strengthening-Ryo}
	There exist CM (non-free) $R$-modules $M$ of finite G-dimension $g$ such that $\type(M) = \mu( \Ext_R^g(M,R) )$, where $\type(M)$ can be arbitrarily large, see, e.g., Example~\ref{exam:G-perfect-non-free-modules}). It justifies that Corollary~\ref{cor:Gor-iff-there-is-CM-G-dim-finite-mod}.(1)$\Leftrightarrow$(2) considerably strengthens \cite[Thm.~2.3.(i)$\Leftrightarrow$(iii)]{Ta04}.
\end{remark}

\begin{remark}
	The existence of a G-perfect $R$-module of type $1$ does not necessarily imply that $R$ is Gorenstein. For example, $R = K[[X,Y]]/(X^2,XY)$ has type $1$, and it is not CM. So $M=R$ is a G-perfect $R$-module of type $1$, but $R$ is not Gorenstein. So the conditions `$M$ is CM' in (2) and `$R$ is CM' in (3) cannot be removed.
\end{remark}

\begin{remark}
	The existence of a nonzero CM $R$-module $M$ of finite G-dimension does not guarantee that $R$ is Gorenstein, for example $M=R$, where $R$ is a non-Gorenstein CM local ring. So the assumption that $\type(M) \le \mu( \Ext_R^g(M,R) )$ cannot be omitted from (2) and (3) in Corollary~\ref{cor:Gor-iff-there-is-CM-G-dim-finite-mod}.
\end{remark}

\section{When Ext modules have finite injective dimension}
	
	In this section, we study the consequences of certain Ext modules having finite injective dimension. We first generalize a result of Foxby, which was proved for CM modules in \cite[Prop.~3.5.(b)]{Fo71}, see Remark~\ref{rmk:Foxby}.
	
	\begin{proposition}\label{prop:injdim-Ext-finite}
		Let $R$ be a CM local ring with a canonical module $\omega$. Let $M$ be an $R$-module. If $\id_R( \Ext_R^i(M,\omega) )$ is finite for all $i \ge 0$, then $\pd_R(M)$ is finite.
	\end{proposition}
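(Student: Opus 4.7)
The plan is to run the two spectral sequences of \ref{spec-seq} with $L=k$ and $N=\omega$: the horizontal one will compute $H^\bullet(\Tot)$ in terms of $\Tor^R_\bullet(k,M)$, while the vertical one will be bounded above using the injective-dimension hypothesis, so that comparing the two outputs forces $\Tor^R_m(k,M)=0$ for $m\gg 0$.

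First I would analyze the horizontal spectral sequence
\[
{}^hE_2^{p,q}(\mathbb{Y}) = \Ext_R^q(\Tor_p^R(k,M),\omega).
\]
Since $\Tor_p^R(k,M)$ is a $k$-vector space and $\omega$ is a canonical module of the CM ring $R$ of dimension $d$, the standard computation $\Ext_R^q(k,\omega) = 0$ for $q\ne d$ and $\Ext_R^d(k,\omega)\cong k$ (cf.~\cite[3.3.10]{BH93}) shows that ${}^hE_2$ is concentrated on the single row $q=d$, with ${}^hE_2^{p,d}\cong \Hom_k(\Tor_p^R(k,M),k)$. Hence the sequence collapses at $E_2$, and one reads off
\[
H^n(\Tot(\mathbb{Y}))\cong \Hom_k\!\bigl(\Tor_{n-d}^R(k,M),k\bigr) \text{ for } n\ge d, \text{ and } H^n(\Tot(\mathbb{Y}))=0 \text{ for } n<d.
\]

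Next I would show that the vertical spectral sequence ${}^vE_2^{p,q}(\mathbb{X}) = \Ext_R^p(k,\Ext_R^q(M,\omega))$ has bounded support. Because $\id_R(\omega)=d$, one has $\Ext_R^q(M,\omega)=0$ for $q>d$, so only rows $0\le q\le d$ can be nonzero. For each such $q$, the hypothesis $\id_R(\Ext_R^q(M,\omega))<\infty$ forces ${}^vE_2^{p,q}=0$ once $p$ exceeds $\id_R(\Ext_R^q(M,\omega))$. Taking $D:=\max_{0\le q\le d}\id_R(\Ext_R^q(M,\omega))$, we get ${}^vE_2^{p,q}=0$ whenever $p>D$ or $q>d$, and hence $H^n(\Tot(\mathbb{X}))=0$ for every $n>D+d$. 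Comparing with the first computation via \eqref{coh-Tot-X-Y-iso} yields $\Tor_m^R(k,M)=0$ for every $m>D$, which by Nakayama applied to the minimal free resolution of $M$ gives $\pd_R(M)\le D<\infty$. The only conceptual step is recognising that the $\omega$-Ext data is exactly what collapses one of the two spectral sequences onto a single row, and I do not anticipate any serious obstacle once the machinery of \ref{spec-seq} is in place.
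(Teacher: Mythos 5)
Your proof is correct, but it takes a genuinely different route from the paper's. You run the two tensor--hom spectral sequences of \ref{spec-seq} with $L=k$, $N=\omega$: the horizontal one collapses onto the row $q=d$ because $\Ext_R^q(k,\omega)$ is concentrated in degree $d$ (and equals $k$ there), so $H^n(\Tot) \cong k^{b_{n-d}}$ where $b_p = \dim_k\Tor^R_p(k,M)$; the vertical one is supported in $\{p\le d,\ q\le d\}$ because each nonzero $\Ext_R^q(M,\omega)$ has injective dimension exactly $d$ by Bass's theorem and $\Ext_R^q(M,\omega)=0$ for $q>d$ since $\id_R\omega=d$, so $H^n(\Tot)=0$ for $n>2d$; comparing via \eqref{coh-Tot-X-Y-iso} gives $b_m=0$ for $m>d$, hence $\pd_R(M)\le d$. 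The paper, by contrast, deliberately avoids spectral sequences here: it first reduces to the case $M$ MCM by passing to $\Omega_R^d(M)$ and chasing the long exact sequence of $\Ext_R^\bullet(-,\omega)$ induced by $0\to\Omega(M)\to R^b\to M\to 0$ to see the hypothesis persists, then observes that for MCM $M$ only $\Hom_R(M,\omega)$ survives and is MCM of finite injective dimension, so by \cite[3.3.28]{BH93} it is a direct sum of copies of $\omega$, and applying the $\omega$-dual again recovers that $M$ is free. The authors explicitly flag (Remark~\ref{rmk:Foxby} and the remark following it) that their proof is elementary and spectral-sequence-free, in contrast to the Thiago--Holanda proof of a similar statement; your argument is essentially of that latter, spectral-sequence flavor. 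What your approach buys is uniformity with the methods used elsewhere in the paper (Theorem~\ref{thm:G-perfect-and-type-formula}, Theorem~\ref{thm:injdim-Ext-finite-N-d-1}, Lemma~\ref{lem:vanshing-ext-tor}), while the paper's proof buys a self-contained, machinery-free argument and also yields directly the stronger conclusion that a MCM module satisfying the hypothesis is actually free.
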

	
	\begin{proof}
		Let $\id_R( \Ext_R^i(M,\omega) ) < \infty$ for all $i \ge 0$. In order to prove that $\pd_R(M)$ is finite, we may assume that $M$ is MCM as $M$ can be replaced by the syzygy module $\Omega_R^d(M)$, which is MCM, where $d=\dim(R)$. To justify this reduction, it is enough to showing that $\id_R( \Ext_R^i(\Omega(M),\omega) )$ is finite for all $i$.  Considering the short exact sequence $0 \to \Omega(M) \to R^b  \to M \to 0$, we get an exact sequence:
		\begin{align}\label{exact-seq-Ext-injdim}
		0 \longrightarrow & \Hom_R(M,\omega) \longrightarrow \omega^b \longrightarrow \Hom_R(\Omega(M),\omega) \longrightarrow \\
		&\Ext^1_R(M,\omega) \longrightarrow 0 \longrightarrow \Ext^1_R(\Omega(M),\omega) \longrightarrow \nonumber \\
		&\Ext^2_R(M,\omega) \longrightarrow 0 \longrightarrow \Ext^2_R(\Omega(M),\omega) \longrightarrow \cdots.\nonumber
		\end{align}
		Since $\omega$ and $\Ext_R^i(M,\omega)$ for all $i \ge 0$ have finite injective dimensions, it follows from the exact sequence \eqref{exact-seq-Ext-injdim} that $\id_R( \Ext_R^i(\Omega(M),\omega) )$ is finite for all $i$.
		
		Since $M$ is MCM, $\Ext_R^i(M,\omega) = 0$ for all $i \neq 0$, and $\Hom_R(M,\omega)$ is also MCM (see, e.g., \cite[3.3.3]{BH93}). Thus $\Hom_R(M,\omega)$ is MCM of finite injective dimension. Therefore, by \cite[3.3.28]{BH93}, $\Hom_R(M,\omega) \cong \omega^r$ for some $r\ge 1$. Hence, applying the $\omega$-dual, one obtains that $M$ is free (cf.~\cite[3.3.10]{BH93}).
	\end{proof}
	
	\begin{remark}\label{para:inf-sup-nonvanishing}
		Let $M$ and $N$ be nonzero $R$-modules. Then
		\begin{center}
			$\depth(\ann_R(M),N) = \min\{ i : \Ext_R^i(M,N) \neq 0 \}$.
		\end{center}
		If $\id_R(N) < \infty$, then we also have that
		\begin{center}
			$\dim(R) - \depth(M) = \sup\{ i : \Ext_R^i(M,N) \neq 0 \}$,
		\end{center}
		see, e.g., \cite[1.2.10.(e) and 3.1.24]{BH93}.
	\end{remark}

	\begin{remark}\label{rmk:Foxby}
		Let $R$ be a CM local ring of dimension $d$ with a canonical module $\omega$. Let $M$ be a CM $R$-module of dimension $s$. Then, by Remark~\ref{para:inf-sup-nonvanishing}, $\Ext_R^i(M,\omega)\neq 0$ \iff $i = d-s$. In \cite[Prop.~3.5.(b)]{Fo71}, Foxby showed that $\id_R(\Ext_R^{d-s}(M,\omega))$ is finite \iff $ \pd_R(M) $ is so. Note that Proposition~\ref{prop:injdim-Ext-finite} generalizes the `only if' part, while the `if' part is trivial. Moreover, our proof is elementary, and it does not use spectral sequence.
	\end{remark}

	\begin{remark}
		We got to know from Holanda that a similar result as in Proposition~\ref{prop:injdim-Ext-finite} is proved in \cite{TH} by Thiago-Holanda.
		Let $R$ be a homomorphic image of a Gorenstein local ring $S$.
		Let $ M $ be an $ R $-module
		such that $ \id_R(\Ext_S^i(M,S)) < \infty $ for all $i$.
		Then $ \pd_R(M) < \infty $, see \cite[Cor.~4.7]{TH}. When $R$ is CM, then Proposition~\ref{prop:injdim-Ext-finite} is same as \cite[Cor.~4.7]{TH}, due to local duality, see, e.g., \cite[11.2.6 and 12.1.20]{BS98}. However, unlike Proposition~\ref{prop:injdim-Ext-finite}, the proof of \cite[Cor.~4.7]{TH} uses spectral sequence arguments.
	\end{remark}

	In view of Proposition~\ref{prop:injdim-Ext-finite}, we are mainly interested in the following question.
	
	\begin{question}\label{ques:injdim-Ext-finite}
		Let $M$ and $N$ be nonzero $R$-modules. Suppose that $\id_R(N) < \infty$. If $\id_R( \Ext_R^i(M,N) )$ is finite for all $i \ge 0$, then is $\pd_R(M)$ finite?
	\end{question}
	
	When $N$ is MCM and $\id_R(N) < \infty$, then $N \cong \omega^r$ for some $r \ge 1$, see, e.g., \cite[3.3.28]{BH93}. Hence, by Proposition~\ref{prop:injdim-Ext-finite}, Question~\ref{ques:injdim-Ext-finite} has a positive answer if $N$ is MCM. In the following theorem, we show that this question also has an affirmative answer when $\depth(N) = \dim(R)-1$.
	
	\begin{theorem}\label{thm:injdim-Ext-finite-N-d-1}
		Let $M$ and $N$ be nonzero $R$-modules. Suppose that $\id_R(N) < \infty$ and $\depth(N) \ge \dim(R)-1$. If $\id_R( \Ext_R^i(M,N) )$ is finite for all $i \ge 0 $, then $\pd_R(M)$ is finite.
	\end{theorem}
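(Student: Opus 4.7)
The plan is to apply the two tensor–hom spectral sequences from \ref{spec-seq} with $L = k$ and compare their common abutment. Since $N \ne 0$ has finite injective dimension, the Bass conjecture forces $R$ to be Cohen-Macaulay, and the Bass formula then gives $\id_R(N) = d := \dim(R) = \depth(R)$. Likewise, for each $q$ with $\Ext_R^q(M,N) \ne 0$, the hypothesis and the Bass formula give $\id_R(\Ext_R^q(M,N)) = d$, and $\Ext_R^q(M,N) = 0$ for $q > d$.

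The pivotal observation is that the hypothesis $\depth(N) \ge d-1$ confines the horizontal $E_2$ page \eqref{spec-seq-Y}
\[
	{}^hE_2^{p,q}(\mathbb{Y}) \;\cong\; \Ext_R^q(k,N)^{\beta_p(M)},
\]
with $\beta_p(M) := \dim_k \Tor_p^R(k,M)$, to the two rows $q \in \{d-1,d\}$. Because the differentials ${}^hd_r$ have bidegree $(-r+1, r)$, for every $r \ge 2$ both the outgoing and incoming differentials to a point in these two rows land in (or come from) a zero group. Hence ${}^hE_\infty = {}^hE_2$, and reading off the filtration on the abutment gives
\[
	\dim_k H^n(\Tot(\mathbb{Y})) = \dim_k \Ext_R^{d-1}(k,N)\cdot\beta_{n-d+1}(M) + \dim_k \Ext_R^d(k,N)\cdot\beta_{n-d}(M),
\]
and crucially $\dim_k \Ext_R^d(k,N) > 0$ since $\id_R(N) = d$.

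On the vertical side, ${}^vE_2^{p,q}(\mathbb{X}) = \Ext_R^p(k, \Ext_R^q(M,N))$ is supported in the box $0 \le p, q \le d$: the bound on $q$ comes from $\id_R(N) = d$, and the bound on $p$ comes from the Bass formula applied to each nonzero $\Ext_R^q(M,N)$, which has finite injective dimension by hypothesis. Consequently ${}^vE_\infty^{p,q}$ is also supported in this box, which forces $H^n(\Tot(\mathbb{X})) = 0$ for every $n > 2d$. Combining with the isomorphism \eqref{coh-Tot-X-Y-iso} yields $\dim_k \Ext_R^d(k,N)\cdot\beta_{n-d}(M) = 0$ for all $n > 2d$, and hence $\beta_i(M) = 0$ for $i > d$, i.e., $\pd_R(M) \le d < \infty$.

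The main obstacle, and the reason the argument works precisely under the assumption $\depth(N) \ge d-1$, is securing the collapse of ${}^hE$ at the second page: a smaller depth would leave additional nonzero rows on ${}^hE_2$ and allow nontrivial higher differentials, so one could no longer read the Betti numbers of $M$ directly off $H^n(\Tot(\mathbb{Y}))$. Everything else, most notably the two-sided bounds on the supports of the two $E_\infty$ pages, is a direct consequence of Bass' formula applied to the modules of finite injective dimension at hand.
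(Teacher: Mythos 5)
Your argument is correct. It uses the same pair of tensor-hom spectral sequences with $L=k$ and the same degeneration analysis of the horizontal page, so it is essentially the paper's proof; the one genuine (if minor) variation is how you bound the vertical side. The paper first replaces $M$ by $\Omega_R^d(M)$ (checking, as in Proposition~\ref{prop:injdim-Ext-finite}, that the hypothesis passes to the syzygy), so that Ischebeck forces $\Ext_R^i(M,N)=0$ for $i>0$; the vertical spectral sequence then collapses onto the single row $q=0$, giving $H^n(\Tot(\mathbb{X}))=\Ext_R^n(k,\Hom_R(M,N))=0$ for $n>d$. You avoid the syzygy reduction entirely by invoking the Bass formula on \emph{each} nonzero $\Ext_R^q(M,N)$ to confine ${}^vE_2$ to the box $[0,d]\times[0,d]$, getting the weaker but still sufficient vanishing $H^n(\Tot(\mathbb{X}))=0$ for $n>2d$. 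On the horizontal side you read off the Betti numbers from the explicit two-term dimension formula for $H^n(\Tot(\mathbb{Y}))$ rather than from the short exact sequence $0\to {}^hE_2^{d+1,d}\to H^{2d+1}\to {}^hE_2^{d+2,d-1}\to 0$ that the paper uses, and you conclude $\beta_i(M)=0$ for $i>d$ from the nonvanishing of $\Ext_R^d(k,N)$; these two readings of the degenerate $E_\infty$ page are interchangeable. Net effect: your version skips the syzygy reduction at the cost of a coarser range for the vanishing of $H^n(\Tot(\mathbb{X}))$, which is a fair trade.
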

	
	\begin{proof}
		Let $\id_R( \Ext_R^i(M,N) ) < \infty$ for all $i\ge 0$. As in the proof of Proposition~\ref{prop:injdim-Ext-finite}, we may assume that $M$ is MCM. Then, by a result of Ischebeck (cf.~\cite[3.1.24]{BH93}), $\Ext_R^i(M,N) = 0$ for all $i \neq 0$. We consider the spectral sequences discussed in \ref{spec-seq} for $L=k$. It follows that the spectral sequence ${}^vE_2^{p,q}(\mathbb{X}) = \Ext_R^p\left(k, \Ext_R^q(M,N)\right)$ collapses on the line $q=0$. Hence the cohomologies of the total complex of $\mathbb{X}$ are given by
		\begin{equation}\label{coh-Tot-X-id}
		H^n(\Tot(\mathbb{X})) \cong {}^vE_2^{n,0}(\mathbb{X}) = \Ext_R^n(k, \Hom_R(M,N)) \; \mbox{ for all } n,
		\end{equation}
		see, e.g., \cite[5.2.7]{We94}. Set $d:=\dim(R)$. Since $\Hom_R(M,N)$ has finite injective dimension, $\id_R( \Hom_R(M,N) ) = d$, and hence
		\begin{equation}\label{coh-Tot-X-zero}
			H^n(\Tot(\mathbb{X})) =0 \; \mbox{ for all } n > d.
		\end{equation}
		On the other hand, since $\Tor_p^R(k,M)$ is a free $k$-module for every $p$, and $\depth(N) \ge d-1$, the other spectral sequence has at most two nonzero rows, namely $q=d-1$ and $q=d$, i.e., 
		\begin{equation}\label{spec-seq-Y-zero-id}
		{}^hE_2^{p,q}(\mathbb{Y}) = \Ext_R^q\left( \Tor_p^R(k,M), N \right) = 0 \; \mbox{ for all } q \neq d-1, \,d.
		\end{equation}
		Note that the differentials in ${}^hE_2^{p,q}$ are given by ${}^h d_2^{p,q} :{}^hE_2^{p,q} \longrightarrow {}^hE_2^{p-1,q+2}$. So \eqref{spec-seq-Y-zero-id} yields that ${}^hE_{\infty}^{p,q}(\mathbb{Y}) = {}^hE_2^{p,q}(\mathbb{Y})$. Therefore there is a short exact sequence:
		\begin{align}
		0 \longrightarrow {}^hE_2^{d+1,d}(\mathbb{Y}) \longrightarrow H^{2d+1}(\Tot(\mathbb{Y})) \longrightarrow {}^hE_2^{d+2,d-1}(\mathbb{Y}) \longrightarrow 0,
		\end{align}
		see, e.g., \cite[10.29]{Ro09}. Since $H^{2d+1}(\Tot(\mathbb{Y})) \cong H^{2d+1}(\Tot(\mathbb{X})) = 0$ (by \eqref{coh-Tot-X-zero}), it follows that
		\begin{center}
			${}^hE_2^{d+1,d}(\mathbb{Y}) = \Ext_R^d\left( \Tor_{d+1}^R(k,M), N \right) = 0$,
		\end{center}
		which implies that $\Tor_{d+1}^R(k,M) = 0$ (see \cite[3.1.24]{BH93}). Thus $\pd_R(M)$ is finite.
	\end{proof}
	
	As an application of Theorem~\ref{thm:injdim-Ext-finite-N-d-1}, we obtain the following characterization of Gorenstein local rings in terms of injective dimension of certain modules.
	
	\begin{corollary}\label{cor:injdim-Ext-finite-N-d-1-charac-Gor}
		Suppose there is a nonzero $R$-module $M$ of depth $\ge \dim(R) - 1$ such that the injective dimensions of $M$, $\Hom_R(M,M)$ and $\Ext_R^1(M,M)$ are finite. Then, $M$ has finite projective dimension, and $R$ is Gorenstein.
	\end{corollary}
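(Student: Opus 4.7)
The plan is to reduce this corollary directly to Theorem~\ref{thm:injdim-Ext-finite-N-d-1} by taking $N = M$, and then invoke Foxby's characterization of Gorenstein rings (Theorem~\ref{thm:known-charac-via-inj-dim}.(2)). The only non-trivial point is verifying that the hypothesis of Theorem~\ref{thm:injdim-Ext-finite-N-d-1} is satisfied for \emph{all} $i \ge 0$, not just $i = 0, 1$.

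First I would observe that, since $\id_R(M) < \infty$, the upper bound in Remark~\ref{para:inf-sup-nonvanishing} applies:
\[
\sup\{\, i : \Ext_R^i(M,M) \neq 0 \,\} = \dim(R) - \depth(M) \le 1,
\]
where the inequality uses the assumption $\depth(M) \ge \dim(R) - 1$. Consequently $\Ext_R^i(M,M) = 0$ for every $i \ge 2$. Combining this with the given finiteness of $\id_R(\Hom_R(M,M))$ and $\id_R(\Ext_R^1(M,M))$, we obtain $\id_R(\Ext_R^i(M,M)) < \infty$ for all $i \ge 0$.

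Now the pair $(M,N) := (M,M)$ satisfies all the hypotheses of Theorem~\ref{thm:injdim-Ext-finite-N-d-1}: $N = M$ is nonzero with $\id_R(N) < \infty$, $\depth(N) \ge \dim(R) - 1$, and $\id_R(\Ext_R^i(M,N))$ is finite for every $i \ge 0$. Applying that theorem yields $\pd_R(M) < \infty$.

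Finally, since $M$ is a nonzero $R$-module with both $\pd_R(M)$ and $\id_R(M)$ finite, Theorem~\ref{thm:known-charac-via-inj-dim}.(2) (Foxby) forces $R$ to be Gorenstein, completing the proof. There is no real obstacle here; the substance of the argument is already encapsulated in Theorem~\ref{thm:injdim-Ext-finite-N-d-1}, and the role of the corollary is to package the self-dual case $N=M$ where the Ischebeck-type vanishing from Remark~\ref{para:inf-sup-nonvanishing} makes the hypothesis on $\Ext_R^i(M,N)$ automatic above degree one.
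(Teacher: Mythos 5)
Your argument is correct and follows exactly the same route as the paper's proof: use Remark~\ref{para:inf-sup-nonvanishing} (Ischebeck) to kill $\Ext_R^{\ge 2}(M,M)$, apply Theorem~\ref{thm:injdim-Ext-finite-N-d-1} with $N=M$ to get $\pd_R(M)<\infty$, and then invoke Foxby's characterization (Theorem~\ref{thm:known-charac-via-inj-dim}.(2)). No gaps; you have merely spelled out the vanishing step in a bit more detail than the paper does.
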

	
	\begin{proof}
		In view of Remark~\ref{para:inf-sup-nonvanishing}, $\Ext_R^i(M,M) = 0$ for all $i > 1$. So, it follows from Theorem~\ref{thm:injdim-Ext-finite-N-d-1} that $\pd_R(M)$ is finite. Thus both the projective and injective dimensions of $M$ are finite. Hence, by Theorem~\ref{thm:known-charac-via-inj-dim}.(2), $R$ is Gorenstein.
	\end{proof}
	
%
	
	\begin{remark}
		In \cite{GT21}, a few criteria are provided for a module to be free and for a local ring to be Gorenstein in terms of vanishing of certain Ext and injective dimension of Hom. Along with some partial positive answers, a question was raised in \cite[Ques.~2.18]{GT21} whether $R$ is Gorenstein if there exists a nonzero $R$-module $M$ such that $\Hom_R(M,M)$ has finite injective dimension. Corollary~\ref{cor:injdim-Ext-finite-N-d-1-charac-Gor} provides another partial positive answer to this question.
	\end{remark}

	In view of Corollary~\ref{cor:injdim-Ext-finite-N-d-1-charac-Gor}, one may ask whether $ M $ and $ \Ext_R^i(M,M) $ having finite injective dimensions for all $0 \le i \le \dim(R) - \depth(M)$ imply that $M$ has finite projective dimension. It is equivalent to the following question.
%
%
	
	\begin{question}\label{ques:inj-dim-Ext-finite-is-R-Gor}
		Let $M$ be a nonzero $R$-module such that the injective dimensions of $M$ and $\Ext_R^i(M,M)$ are finite for all $0 \le i \le \dim(R) - \depth(M)$. Is $R$ Gorenstein?
	\end{question}
	
	Note that Question~\ref{ques:inj-dim-Ext-finite-is-R-Gor} has a positive answer when $\dim(R) - \depth(M) \le 1$ as shown in Corollary~\ref{cor:injdim-Ext-finite-N-d-1-charac-Gor}.

\section{Symmetry in vanishing of Ext and Tate cohomology}

Throughout this section, we work with the following setup.

\begin{setup}\label{setup:CM-ring-M+}
	Let $(R,\fm)$ be a CM local ring of dimension $d$ with a canonical module $\omega$. Let $M$ and $N$ be $R$-modules. Set $M^{+} := \Ext_R^{d-\dim(M)}(M,\omega)$.
\end{setup}

With Setup~\ref{setup:CM-ring-M+}, we prove the following result on the vanishing of Ext and Tor. This considerably strengthens a result of Huneke-Jorgensen \cite[Thm.~2.1]{HJ03}, which was proved under the condition that $R$ is a Gorenstein local ring, and $M$, $N$ are MCM $R$-modules.

\begin{theorem}\label{thm:Vanishing of Ext and Tor}
	Let $M$ and $N$ be CM $R$-modules. Then the following are equivalent:
	\begin{enumerate}[\rm (1)]
		\item $\Ext_R^i(M,N^{+}) = 0$ for all $i \gg 0$.
		\item $\Ext_R^i(N,M^{+}) = 0$ for all $i \gg 0$.
		\item $\Tor_i^R(M,N) = 0$ for all $i \gg 0$.
	\end{enumerate}
\end{theorem}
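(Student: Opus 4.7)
The plan is to use the spectral sequences from tensor-hom adjunction established in \ref{spec-seq}. Set $d := \dim(R)$, $s := d - \dim(N)$, and $t := d - \dim(M)$. Apply the framework of \ref{spec-seq} with $L := M$, with the middle module played by $N$, and with $\mathbb{I}$ taken to be an injective resolution of $\omega$. That is, consider the isomorphic double complexes $\mathbb{X} := \Hom_R(\mathbb{P}, \Hom_R(N, \mathbb{I}))$ and $\mathbb{Y} := \Hom_R(\mathbb{P} \otimes_R N, \mathbb{I})$, where $\mathbb{P}$ is a projective resolution of $M$. Since $N$ is CM, Ischebeck's vanishing gives $\Ext_R^q(N, \omega) = 0$ for $q \neq s$ and $\Ext_R^s(N, \omega) = N^+$; hence the vertical spectral sequence of $\mathbb{X}$ collapses on the line $q = s$, yielding $H^n(\Tot(\mathbb{X})) \cong \Ext_R^{n-s}(M, N^+)$. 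The horizontal spectral sequence of $\mathbb{Y}$ then reads
\[
{}^hE_2^{p,q}(\mathbb{Y}) = \Ext_R^q(\Tor_p^R(M, N), \omega) \implies \Ext_R^{p+q-s}(M, N^+).
\]

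The symmetric construction (swapping the roles of $M$ and $N$) together with $\Tor_p^R(M,N) \cong \Tor_p^R(N,M)$ yields $\Ext_R^{n-s}(M, N^+) \cong \Ext_R^{n-t}(N, M^+)$ for every $n$, giving the equivalence $(1) \Leftrightarrow (2)$. The implication $(3) \Rightarrow (1)$ is immediate: if $\Tor_p^R(M,N) = 0$ for all $p > c$, then the $E_2$-page vanishes for $p > c$, and since $\id_R(\omega) = d$ it vanishes also for $q > d$; hence every $E_2$-entry on the line $p + q > c + d$ is zero, forcing $\Ext_R^{n-s}(M, N^+) = 0$ for $n > c + d$.

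The main content is $(1) \Rightarrow (3)$, which I would prove by induction on $e := \dim(\Supp_R(M) \cap \Supp_R(N))$, the uniform upper bound on $\dim \Tor_p^R(M,N)$. By Ischebeck the horizontal spectral sequence is confined to the strip $d - e \le q \le d$ and stabilizes at page $E_{e+2}$. In the base case $e = 0$, every $\Tor_p^R(M,N)$ has finite length, so the spectral sequence collapses to row $q = d$ and $E_\infty^{p,d} = E_2^{p,d} = \Ext_R^d(\Tor_p^R(M,N), \omega)$; by local duality this is the Matlis dual of $\Tor_p^R(M,N)$, hence vanishes iff $\Tor_p^R(M,N) = 0$, and $\Ext_R^i(M, N^+) = 0$ for $i \gg 0$ directly forces $\Tor_p^R(M,N) = 0$ for $p \gg 0$.

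For the inductive step $e \ge 1$, use prime avoidance to choose $x \in \fm$ regular on each of $R$, $\omega$, $M$, $N$ (possible since $e \ge 1$ implies all four have positive depth) and avoiding the finitely many primes in $\Supp M \cap \Supp N$ of dimension $e$. Set $\bar R := R/xR$, $\bar \omega := \omega/x\omega$, $\bar M := M/xM$, $\bar N := N/xN$; then $\bar R$ is CM of dimension $d - 1$ with canonical module $\bar \omega$, both $\bar M$ and $\bar N$ are CM over $\bar R$, and the intersection of their supports has dimension $e - 1$. The key intermediate step is to verify that $x$ is $N^+$-regular and $\bar N^{+,\bar R} \cong N^+ / x N^+$: apply $\Ext_R^\bullet(-,\omega)$ to $0 \to N \xrightarrow{x} N \to \bar N \to 0$, use $\Ext_R^{s-1}(N,\omega) = 0 = \Ext_R^s(\bar N, \omega)$ together with the Rees change-of-rings isomorphism $\Ext_R^{s+1}(\bar N, \omega) \cong \Ext_{\bar R}^s(\bar N, \bar \omega) = \bar N^{+,\bar R}$. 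Then applying $\Ext_R^\bullet(M,-)$ to $0 \to N^+ \xrightarrow{x} N^+ \to N^+/xN^+ \to 0$ and using the change-of-rings identification $\Ext_R^i(M,T) \cong \Ext_{\bar R}^i(\bar M,T)$ for $\bar R$-modules $T$ (valid since $x$ is $M$-regular) transfers the hypothesis to $\Ext_{\bar R}^i(\bar M, \bar N^{+,\bar R}) = 0$ for $i \gg 0$. The inductive hypothesis over $\bar R$ yields $\Tor_i^{\bar R}(\bar M, \bar N) = 0$ for $i \gg 0$, which via the change-of-rings identification $\Tor_i^{\bar R}(\bar M, \bar N) \cong \Tor_i^R(M, \bar N)$ and the long exact Tor sequence of $0 \to N \xrightarrow{x} N \to \bar N \to 0$, combined with Nakayama, upgrades to $\Tor_i^R(M,N) = 0$ for $i \gg 0$. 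The principal obstacle is the change-of-rings bookkeeping in the inductive step, in particular the identification $\bar N^{+,\bar R} \cong N^+/xN^+$ and ensuring that Ext/Tor vanishing faithfully transfers through reduction modulo $x$.
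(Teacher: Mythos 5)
Your proposal is correct but follows a genuinely different route from the paper's. The paper isolates an asymmetric lemma (Lemma~\ref{lem:vanshing-ext-tor}): for $L$ \emph{arbitrary} and $M$ CM, $\Ext_R^{\gg 0}(L,M^{+})=0 \Leftrightarrow \Tor_{\gg 0}^R(L,M)=0$, proved by induction on $\dim M$ \emph{entirely over $R$} --- the inductive step only reduces $M$ modulo an $M$-regular element $x$, uses Proposition~\ref{prop:properties-of-M+}.(3) to identify $(M/xM)^{+}\cong M^{+}/xM^{+}$, and applies Nakayama to the long exact sequences; no change of rings is needed because $L$ stays fixed. The theorem then drops out of two applications of the lemma plus $\Tor_i^R(M,N)\cong\Tor_i^R(N,M)$. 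You instead keep both modules CM, run the induction on $e=\dim(\Supp M\cap\Supp N)$, and pass to $\overline{R}=R/xR$, which forces you to choose $x$ regular on $R$, $\omega$, $M$ and $N$ simultaneously and to track the Rees/change-of-rings identifications $\Ext_R^{s+1}(\overline{N},\omega)\cong\Ext_{\overline{R}}^{s}(\overline{N},\overline{\omega})$, $\Ext_R^i(M,T)\cong\Ext_{\overline{R}}^i(\overline{M},T)$ and $\Tor_i^{\overline{R}}(\overline{M},\overline{N})\cong\Tor_i^R(M,\overline{N})$. Both are correct, but the paper's asymmetric lemma is leaner and needs fewer hypotheses (only one CM module, one regular element). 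What your route could buy --- if the step were tightened --- is the degreewise isomorphism $\Ext_R^{n-s}(M,N^{+})\cong\Ext_R^{n-t}(N,M^{+})$ rather than just equivalence of vanishing; but as written your justification for it (``together with $\Tor_p^R(M,N)\cong\Tor_p^R(N,M)$ yields\dots'') is too terse: isomorphic $E_2$-pages do not by themselves give isomorphic abutments. One actually needs that $\mathbb{P}_M\otimes_R N$ and $\mathbb{P}_N\otimes_R M$ are quasi-isomorphic (both receive quasi-isomorphisms from $\Tot(\mathbb{P}_M\otimes_R\mathbb{P}_N)$), after which $\Hom_R(-,\mathbb{I}_\omega)$ with $\mathbb{I}_\omega$ bounded preserves the quasi-isomorphism. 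This gap is harmless for the theorem itself, since $(2)\Leftrightarrow(3)$ already follows from your $(1)\Leftrightarrow(3)$ applied with $M$ and $N$ interchanged, together with the symmetry of $\Tor$; but you should either supply the quasi-isomorphism argument or simply delete the $(1)\Leftrightarrow(2)$ claim and appeal to symmetry.
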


\begin{proof}
	The equivalences in the theorem are obtained from Lemma~\ref{lem:vanshing-ext-tor} and the fact that $\Tor_i^R(M, N) \cong \Tor_i^R(N, M)$ for all $i$.
\end{proof}

The following facts are well known, which are used to prove Lemma~\ref{lem:vanshing-ext-tor}.

\begin{proposition}\label{prop:properties-of-M+}
	Let $M$ be a CM $R$-module of dimension $t$. Then
	\begin{enumerate}[\rm (1)]
		\item $\Ext_R^i(M,\omega) = 0$ for all $i \neq d-t$.
		\item The module $M^{+} = \Ext_R^{d-t}(M,\omega)$ is also CM of dimension $t$, and $M^{++} \cong M$.
		\item If $x$ is an $M$-regular element, then $x$ is also $M^{+}$-regular. Moreover,
		\begin{center}
			$(M/xM)^{+} \cong M^{+}/xM^{+}$.
		\end{center}
	\end{enumerate}
\end{proposition}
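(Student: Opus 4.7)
The plan is to prove the three parts in the order (1), (3), (2): part (3) uses (1), and the induction for (2) uses (3).

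For (1), I would invoke local duality, which gives $\Ext_R^i(M,\omega) \cong H^{d-i}_{\fm}(M)^{\vee}$, where $(-)^{\vee}$ is the Matlis dual. Since $M$ is CM of dimension $t$, the local cohomology $H^j_{\fm}(M)$ vanishes for $j \neq t$, and hence $\Ext_R^i(M,\omega)$ vanishes precisely when $i \neq d-t$. Alternatively, Ischebeck's theorem gives the vanishing for $i < d-t$, and the Bruns--Herzog reduction along an $M$-regular sequence of length $t$ handles the range $d-t < i \le d$, after which $i > d$ is automatic because $\id_R(\omega) = d$.

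For (3), I would apply $\Hom_R(-,\omega)$ to the short exact sequence $0 \to M \xrightarrow{x} M \to M/xM \to 0$. Since $x$ is $M$-regular and $M$ is CM of dimension $t$, the quotient $M/xM$ is CM of dimension $t-1$. By part (1), in the long exact sequence of Ext, every term $\Ext^i_R(M,\omega)$ with $i \neq d-t$ and every term $\Ext^i_R(M/xM,\omega)$ with $i \neq d-t+1$ is zero. What remains is exactly
\[
0 \longrightarrow M^{+} \xrightarrow{\;x\;} M^{+} \longrightarrow (M/xM)^{+} \longrightarrow 0,
\]
which simultaneously yields that $x$ is $M^{+}$-regular and that $(M/xM)^{+} \cong M^{+}/xM^{+}$.

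For (2), I will induct on $t = \dim(M)$. The base case $t = 0$ (finite length $M$) reduces to Matlis duality: $M^{+} = \Ext^d_R(M,\omega)$ is naturally isomorphic to the Matlis dual of $M$, so it is again of finite length and $M^{++} \cong M$ is just the biduality for Matlis duals. In the inductive step, I pick $x \in \fm$ regular on $M$ and apply (3) to both $M$ and $M^{+}$. By induction $(M/xM)^{+}$ is CM of dimension $t-1$, and this together with (3) forces $M^{+}$ to be CM of dimension $t$ (the depth goes up by one across the regular element $x$, and $x$ avoids the minimal primes of $M^{+}$ since it is $M^{+}$-regular, so the dimension also goes up by one). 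Applying (3) again produces an isomorphism $M^{++}/xM^{++} \cong (M/xM)^{++}$, and by induction the right-hand side is $M/xM$. A snake lemma/Nakayama argument on the natural evaluation map $\varepsilon_M \colon M \to M^{++}$, using that $x$ is regular on both source and target, then upgrades the reduction-mod-$x$ isomorphism to $\varepsilon_M$ itself.

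The main obstacle is the biduality step in (2): one has to verify that the isomorphism $M^{++}/xM^{++} \cong (M/xM)^{++}$ obtained by iterating (3) actually fits into a commutative square with the biduality maps $\varepsilon_M$ and $\varepsilon_{M/xM}$. This is a naturality check on the connecting maps in the long exact sequences used in (3); once it is in hand, Nakayama closes the argument cleanly, and the rest of the proof is a bookkeeping exercise with long exact sequences.
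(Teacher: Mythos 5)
Your proof of part (3) is exactly the paper's: apply $\Hom_R(-,\omega)$ to $0 \to M \xrightarrow{x} M \to M/xM \to 0$ and use part (1) for both $M$ and $M/xM$ to see that the long exact Ext-sequence degenerates to a single short exact sequence. Where you diverge is that the paper simply cites \cite[3.3.10]{BH93} for parts (1) and (2), whereas you reprove them: local duality (or Ischebeck plus reduction along a regular sequence) for (1), and induction on $t$ with Matlis duality in the base case for (2). These are the standard arguments underlying the cited result, so the overall route is the same; the only thing you gain is self-containedness.

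The one place your writeup has a real gap is the one you flag yourself: to close the induction in (2) you need the isomorphism $M^{++}/xM^{++} \cong (M/xM)^{++}$ coming from two applications of (3) to sit in a commutative square with the biduality maps $\varepsilon_M$ and $\varepsilon_{M/xM}$. Without that naturality check the Nakayama step shows only that $M^{++}/xM^{++}$ is abstractly isomorphic to $M/xM$, not that $\varepsilon_M$ itself is surjective and injective. The check is routine (the connecting maps in the long exact sequences are natural in the module variable, and $\varepsilon$ is a natural transformation), but as written you have not done it; the paper sidesteps the issue entirely by citing Bruns--Herzog, where the naturality of $M \to M^{++}$ is established as part of the statement.
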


\begin{proof}
	See \cite[3.3.10]{BH93} for the assertions (1) and (2). If $x$ is $M$-regular, then $M/xM$ is CM of dimension $t-1$. Therefore, by (1), the exact sequence $0 \to M \stackrel{x}{\to} M \to M/xM \to 0$ yields another exact sequence
	\begin{center}
		$0 \to \Ext_R^{d-t}(M,\omega) \stackrel{x}{\to} \Ext_R^{d-t}(M,\omega) \to \Ext_R^{d-(t-1)}(M/xM,\omega) \to 0$,
	\end{center}
	hence (3) follows.
\end{proof}

\begin{lemma}\label{lem:vanshing-ext-tor}
	Let $L$ and $M$ be $R$-modules, where $M$ is CM. Then
	\begin{center}
		$\Ext_R^i(L,M^{+}) = 0$ for all $i \gg 0$ \ $\Longleftrightarrow$ \ $\Tor_i^R(L, M) = 0$ for all $i \gg 0$.
	\end{center}
\end{lemma}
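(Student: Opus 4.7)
The plan is to establish the biconditional by induction on $t := \dim(M)$, using Proposition~\ref{prop:properties-of-M+}(3) to reduce the statement for $M$ to the same statement for $M/xM$ when $t \ge 1$.

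For the base case $t = 0$, the module $M$ has finite length, so local duality yields
\[
M^{+} \;=\; \Ext_R^d(M,\omega) \;\cong\; \Hom_R(M, E_R(k)),
\]
the Matlis dual of $M$. Denote $(-)^{\vee} := \Hom_R(-, E_R(k))$. Tensor-hom adjunction applied to a free resolution $F_{\bullet} \to L$, together with the exactness of $(-)^{\vee}$, yields the classical identification $\Ext_R^i(L, M^{+}) \cong \Tor_i^R(L, M)^{\vee}$ for every $i \ge 0$. Since $\Tor_i^R(L, M)$ is finitely generated with support in $\{\fm\}$, it has finite length; Matlis duality being faithful on finite-length modules, $\Ext_R^i(L, M^{+}) = 0$ if and only if $\Tor_i^R(L, M) = 0$, which handles the base case.

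For the inductive step, take $t \ge 1$. Using prime avoidance (available since $\depth(M) = t \ge 1$), choose $x \in \fm$ that is $M$-regular. Proposition~\ref{prop:properties-of-M+}(3) then gives that $x$ is also $M^{+}$-regular, that $(M/xM)^{+} \cong M^{+}/xM^{+}$, and that $M/xM$ is CM of dimension $t - 1$. Applying $\Tor_{\bullet}^R(L, -)$ to the short exact sequence $0 \to M \stackrel{x}{\to} M \to M/xM \to 0$, and $\Ext_R^{\bullet}(L, -)$ to $0 \to M^{+} \stackrel{x}{\to} M^{+} \to (M/xM)^{+} \to 0$, one direction in each resulting long exact sequence is immediate. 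For the reverse direction, if $\Tor_i^R(L, M/xM) = 0$ for $i \gg 0$, then $x$ acts surjectively on $\Tor_i^R(L, M)$ in high degrees; since this is a finitely generated $R$-module and $x \in \fm$, Nakayama's Lemma forces $\Tor_i^R(L, M) = 0$ for $i \gg 0$, and the analogous argument handles $\Ext_R^i(L, M^{+})$. Chaining the two biconditionals through the inductive hypothesis applied to $M/xM$ completes the induction.

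I do not anticipate a real obstacle here: the reduction in the inductive step is formal once the stability $(M/xM)^{+} \cong M^{+}/xM^{+}$ from Proposition~\ref{prop:properties-of-M+} is available, and the Nakayama step is the only non-routine move. The subtle point is the base case, where one must identify $M^{+}$ with the Matlis dual via local duality and invoke the classical identity $\Ext_R^i(L, M^{\vee}) \cong \Tor_i^R(L, M)^{\vee}$. Notably, the spectral sequence machinery from~\ref{spec-seq} used elsewhere in this section does not appear necessary for this lemma.
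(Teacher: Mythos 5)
Your proof is correct and the inductive step is essentially identical to the paper's. The genuine difference is in the base case: the paper invokes the two degenerating spectral sequences from~\ref{spec-seq} with $N = \omega$, obtaining $H^n(\Tot(\mathbb{X})) \cong \Ext_R^{n-d}(L,M^{+})$ and $H^n(\Tot(\mathbb{Y})) \cong \Ext_R^d(\Tor_{n-d}^R(L,M),\omega)$, then finishes via Proposition~\ref{prop:properties-of-M+}.(2) (i.e.\ CM duality $(-)^{++} \cong \mathrm{id}$ applied to the finite-length module $\Tor_i^R(L,M)$). You instead observe directly that for finite-length $M$, local duality gives $M^{+} \cong M^{\vee} := \Hom_R(M,E_R(k))$, so the single adjunction isomorphism $\Ext_R^i(L, M^{\vee}) \cong \Tor_i^R(L,M)^{\vee}$ plus faithfulness of Matlis duality on finite-length modules does the job. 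The two routes are really the same mathematics — when one of the spectral sequences collapses to a single line, the degeneration argument \emph{is} the tensor-hom adjunction, and in dimension zero the CM duality $(-)^{++} \cong \mathrm{id}$ \emph{is} Matlis duality — but your phrasing strips away the spectral-sequence bookkeeping, and your closing remark that the machinery of~\ref{spec-seq} is not actually needed for this particular lemma is accurate. One minor point: the paper keeps the spectral-sequence language here for uniformity with the companion Lemma~\ref{lem:vanshing-ext-tor-Tate} on Tate cohomology, where the Matlis-dual shortcut does not transport as cleanly.
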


\begin{proof}
	We use induction on $t := \dim(M)$.	In the base case, assume that $t=0$. Consider the spectral sequences discussed in \ref{spec-seq} for $N=\omega$. In view of \ref{prop:properties-of-M+}.(1), $\Ext_R^q(M,\omega) = 0$ for all $q \neq d$. So the spectral sequence ${}^vE_2^{p,q}(\mathbb{X}) = \Ext_R^p\left(L, \Ext_R^q(M,\omega)\right)$	collapses on the line $q=d$. Hence, for all $n$, the cohomologies of the total complex of $\mathbb{X}$ are given by
	\begin{equation}\label{coh-Tot-X-sym}
	H^n(\Tot(\mathbb{X})) \cong {}^vE_2^{n-d,d}(\mathbb{X}) = \Ext_R^{n-d}(L, \Ext_R^d(M,\omega)) = \Ext_R^{n-d}(L, M^{+})
	\end{equation}
	see, e.g., \cite[5.2.7]{We94}. Since $\dim(M)=0$, $\Tor_p^R(L,M)$ has finite length for every $p$, and hence the other spectral sequence also collapses on the line $q=d$, i.e., 
	\begin{equation}\label{spec-seq-Y-zero-sym}
	{}^hE_2^{p,q}(\mathbb{Y}) = \Ext_R^q\left( \Tor_p^R(L,M), \omega \right) = 0 \; \mbox{for all} \; q \neq d.
	\end{equation}
	Thus the cohomologies of the total complex of $\mathbb{Y}$ are
	\begin{align}\label{coh-Tot-Y-sym}
	H^n(\Tot(\mathbb{Y})) &= {}^hE_2^{n-d,d}(\mathbb{Y}) = \Ext_R^d \left( \Tor_{n-d}^R(L,M), \omega \right) \; \mbox{for all} \; n.
	\end{align}
	Since $H^n(\Tot(\mathbb{X})) \cong H^n(\Tot(\mathbb{Y}))$ for all $n$, it follows from \eqref{coh-Tot-X-sym} and \eqref{coh-Tot-Y-sym} that
	\begin{align*}
		\Ext_R^i(L,M^{+}) = 0\; \mbox{for all} \; i \gg 0  &\Longleftrightarrow  \Ext_R^d \left( \Tor_i^R(L,M), \omega \right) = 0 \; \mbox{for all} \; i \gg 0.\\
		&\stackrel{(\dagger)}{\Longleftrightarrow} \Tor_i^R(L, M) = 0 \; \mbox{for all} \; i \gg 0,
	\end{align*}
	where $(\dagger)$ is obtained by Proposition~\ref{prop:properties-of-M+}.(2). This completes the base case.
	
	For the inductive step, assume that $t \ge 1$. Then there is an $M$-regular element $x$. In view of Proposition~\ref{prop:properties-of-M+}.(3), $x$ is also $M^{+}$-regular, and $(M/xM)^{+} \cong M^{+}/xM^{+}$. Applying $\Hom_R(L,-)$ on the short exact sequence $0 \to M^{+} \stackrel{x}{\longrightarrow} M^{+} \longrightarrow (M/xM)^{+} \to 0$, one obtains a long exact sequence of Ext modules. Hence, by the Nakayama's Lemma, it can be derived that
	\begin{equation}\label{Ext-equiv-mod-x}
	\Ext_R^i(L,M^{+}) = 0 \ \mbox{for all} \ i \gg 0 \Longleftrightarrow \Ext_R^i(L,(M/xM)^{+}) = 0 \ \mbox{for all} \ i \gg 0.
	\end{equation}
	Similarly, applying $L \otimes_R (-)$ on the short exact sequence $0 \to M \stackrel{x}{\to} M \to M/xM \to 0$, there is a long exact sequence of Tor modules, which implies that
	\begin{equation}\label{Tor-equiv-mod-x}
	\Tor_i^R(L, M) = 0 \; \mbox{for all} \; i \gg 0 \; \Longleftrightarrow \; \Tor_i^R(L, M/xM) = 0 \; \mbox{for all} \; i \gg 0.
	\end{equation}
	Since $M/xM$ is CM of dimension $t-1$, by the induction hypothesis,
	\begin{equation}\label{Ext-Tor-vanishing-equiv-induction-step}
	\Ext_R^i(L,(M/xM)^{+}) = 0 \ \mbox{for all} \ i \gg 0 \Leftrightarrow \Tor_i^R(L,M/xM) = 0 \ \mbox{for all} \ i \gg 0.
	\end{equation}
	Thus the desired equivalence follows from \eqref{Ext-equiv-mod-x}, \eqref{Tor-equiv-mod-x} and \eqref{Ext-Tor-vanishing-equiv-induction-step}.
\end{proof}

%

Before proving the result on symmetry in vanishing of Tate cohomologies, we first recall the notions of Tate (co)homologies, which are defined by using complete resolutions of modules.

\begin{para}\label{Tate-co-homology}
	A complete resolution of an $R$-module $M$ is defined to be a complex $T$ of projective $R$-modules satisfying the following: (i) the homologies $H_n(T) = 0$ and the cohomologies $H^n(\Hom_R(T,R)) = 0$ for all $n \in \bz$, and (ii) $T_{\ge r} = P_{\ge r}$ for some projective resolution $P$ of $M$ and some integer $r$. These are well known that $M$ has a complete resolution \iff $\gdim_R(M)<\infty$ (due to \cite{AM02}), and if $M$ has complete resolutions, then any two of them are homotopy equivalent, see, e.g., \cite[(2.4)]{CK97}. If $M$ has a complete resolution, and $N$ is an $R$-module, for all $i\in\bz$, the Tate (co)homologies of $M$ and $N$ are defined as
	\[
		\widehat{\Ext}_R^i(M,N) := H^i(\Hom_R(T,N)) \; \mbox{ and } \; \widehat{\Tor}^R_i(M,N) := H_i(T \otimes_R N)
	\]
\end{para}

For the rest of this section, we work over the following setup.

\begin{setup}\label{setup:Gor-ring-M+}
	Let $(R,\fm)$ be a Gorenstein local ring of dimension $d$. Let $L$, $M$ and $N$ be $R$-modules. Note that $M^{+} = \Ext_R^{d-\dim(M)}(M,R)$.
\end{setup}

The following lemma gives a relation between the vanishing of Tate cohomology and that of Tate homology.

\begin{lemma}\label{lem:vanshing-ext-tor-Tate}
	Along with Setup~\ref{setup:Gor-ring-M+}, suppose $M$ is CM. Then
	\begin{center}
		$\widehat{\Ext}_R^i(L,M^{+}) = 0$ for all $i \ll 0$ \ $\Longleftrightarrow$ \ $\widehat{\Tor}_i^R(L, M) = 0$ for all $i \ll 0$.
	\end{center}
\end{lemma}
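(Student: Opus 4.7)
The plan is to mirror the proof of Lemma~\ref{lem:vanshing-ext-tor} exactly, replacing the projective resolution of $L$ by a complete resolution, and the ordinary Ext/Tor by Tate versions. The induction is again on $t := \dim(M)$.

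For the base case $t=0$, let $T$ be a complete resolution of $L$ with finitely generated projective terms, which exists because $\gdim_R(L)<\infty$ since $R$ is Gorenstein. Let $I$ be an injective resolution of $R$, bounded above in degree $d$. Since $I$ is bounded in the cohomological direction, the total complex of the double complex $\mathbb{X}:=\Hom_R(T,\Hom_R(M,I))\cong\Hom_R(T\otimes_R M,I)=:\mathbb{Y}$ has finitely many summands in each total degree, so both column/row spectral sequences converge to $H^{\ast}(\Tot(\mathbb{X}))\cong H^{\ast}(\Tot(\mathbb{Y}))$. The two second pages are
\[
{}^vE_2^{p,q}=\widehat{\Ext}_R^p\bigl(L,\Ext_R^q(M,R)\bigr),\qquad {}^hE_2^{p,q}=\Ext_R^q\bigl(\widehat{\Tor}_p^R(L,M),R\bigr).
\]
Proposition~\ref{prop:properties-of-M+}.(1) collapses ${}^vE_2$ onto the row $q=d$, giving $H^n(\Tot(\mathbb{X}))\cong\widehat{\Ext}_R^{n-d}(L,M^{+})$. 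Each $\widehat{\Tor}_p^R(L,M)$ is a subquotient of $T_p\otimes_R M$, which has finite length because $T_p$ is finitely generated and $M$ has finite length; so Proposition~\ref{prop:properties-of-M+}.(1) applied again collapses ${}^hE_2$ onto $q=d$, giving $H^n(\Tot(\mathbb{Y}))\cong\widehat{\Tor}_{n-d}^R(L,M)^{+}$. Comparing, $\widehat{\Ext}_R^i(L,M^{+})\cong\widehat{\Tor}_i^R(L,M)^{+}$ for every $i\in\bz$. Because $(-)^{+}$ is a Matlis-type duality on finite length modules (Proposition~\ref{prop:properties-of-M+}.(2)), $N^{+}=0$ if and only if $N=0$, which finishes the base case.

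For the inductive step $t\ge 1$, pick an $M$-regular element $x\in\fm$; by Proposition~\ref{prop:properties-of-M+}.(3), $x$ is also $M^{+}$-regular and $(M/xM)^{+}\cong M^{+}/xM^{+}$. The short exact sequences $0\to M\xrightarrow{x}M\to M/xM\to 0$ and $0\to M^{+}\xrightarrow{x}M^{+}\to M^{+}/xM^{+}\to 0$ induce long exact sequences of Tate homology and Tate cohomology in the second argument. Since $T$ has finitely generated terms, $\widehat{\Ext}_R^{\ast}(L,-)$ and $\widehat{\Tor}_{\ast}^R(L,-)$ of finitely generated modules are themselves finitely generated, so Nakayama's lemma applied to the multiplication-by-$x$ maps yields
\[
\widehat{\Ext}_R^i(L,M^{+})=0\ \text{for}\ i\ll 0\ \Longleftrightarrow\ \widehat{\Ext}_R^i(L,(M/xM)^{+})=0\ \text{for}\ i\ll 0,
\]
and the analogous equivalence for $\widehat{\Tor}$. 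Since $M/xM$ is CM of dimension $t-1$, the induction hypothesis connects the two sides and completes the proof.

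The main technical point to watch is convergence of the spectral sequences: $T$ is two-sided infinite, so one must rely on the boundedness of $I$ (the $q$-direction) to ensure that $\Tot^n$ is a finite sum and that the bounded-in-$q$ filtration gives genuine convergence. The rest -- collapse on a single row, finite generation of Tate (co)homology modules, and Nakayama -- is routine. The argument is structurally identical to Lemma~\ref{lem:vanshing-ext-tor}; the only real substitutions are ``complete resolution'' for ``projective resolution'' and the observation that, thanks to the Gorenstein hypothesis ensuring $\id_R(R)<\infty$, $I$ is bounded and the double complexes remain tractable.
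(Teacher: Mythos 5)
Your proof is correct and follows essentially the same route as the paper: the base case runs the two spectral sequences of \ref{spec-seq} with a complete resolution in place of $\mathbb{P}_L$, collapses each on the row $q=d$ using Proposition~\ref{prop:properties-of-M+}.(1), compares the abutments, and the inductive step reduces modulo an $M$-regular element via the two-sided long exact sequences and Nakayama. Your added remarks on convergence (bounded $q$-range from $\id_R(R)=d$, finite sums in each total degree) address a point the paper leaves implicit, but do not change the argument.
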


\begin{proof}
	Here also we use induction on $t := \dim(M)$. First assume that $t=0$. Let $\mathbb{T}_L$ be a complete resolution of $L$, and $\mathbb{I}_R$ be a minimal injective resolution of $R$. We consider the spectral sequence as described in \ref{spec-seq} for $N=R$, and replacing $\mathbb{P}_L$ by $\mathbb{T}_L$. In view of Proposition~\ref{prop:properties-of-M+}.(1), $\Ext_R^q(M,R) = 0$ for all $q \neq d$. So the spectral sequence ${}^vE_2^{p,q}(\mathbb{X}) = \widehat{\Ext}_R^p\left(L, \Ext_R^q(M,R)\right)$ collapses on the line $q=d$. Hence, for all $n$, the cohomologies of the total complex of $\mathbb{X}$ are given by
	\begin{equation}\label{Tate-coh-Tot-X-sym}
	H^n(\Tot(\mathbb{X})) \cong {}^vE_2^{n-d,d}(\mathbb{X}) = \widehat{\Ext}_R^{n-d}\big(L, \Ext_R^d(M,R)\big) = \widehat{\Ext}_R^{n-d}(L, M^{+})
	\end{equation}
	see, e.g., \cite[5.2.7]{We94}. Since $\dim(M)=0$, $\widehat{\Tor}_p^R(L,M)$ has finite length for every $p$, and hence the other spectral sequence also collapses on the line $q=d$, i.e., 
	\begin{equation}\label{Tate-spec-seq-Y-zero-sym}
	{}^hE_2^{p,q}(\mathbb{Y}) = \Ext_R^q\left( \widehat{\Tor}_p^R(L,M), R \right) = 0 \; \mbox{for all} \; q \neq d.
	\end{equation}
	Thus the cohomologies of the total complex of $\mathbb{Y}$ are
	\begin{align}\label{Tate-coh-Tot-Y-sym}
	H^n(\Tot(\mathbb{Y})) &= {}^hE_2^{n-d,d}(\mathbb{Y}) = \Ext_R^d \big( \widehat{\Tor}_{n-d}^R(L,M), R \big) \; \mbox{for all} \; n.
	\end{align}
	Since $H^n(\Tot(\mathbb{X})) \cong H^n(\Tot(\mathbb{Y}))$ for all $n$, it follows from \eqref{Tate-coh-Tot-X-sym} and \eqref{Tate-coh-Tot-Y-sym} that
	\begin{align*}
	\widehat{\Ext}_R^i(L,M^{+}) = 0\; \mbox{for all} \; i \ll 0 \; &\Longleftrightarrow \; \Ext_R^d \big( \widehat{\Tor}_i^R(L,M), R \big) = 0 \; \mbox{for all} \; i \ll 0.\\
	&\Longleftrightarrow \; \widehat{\Tor}_i^R(L, M) = 0 \; \mbox{for all} \; i \ll 0 \; \mbox{[by \ref{prop:properties-of-M+}.(2)].}
	\end{align*}
	This completes the base case.
	
	For the inductive step, suppose $t \ge 1$. There is an $M$-regular element $x$. By \ref{prop:properties-of-M+}.(3), $x$ is also $M^{+}$-regular, and $(M/xM)^{+} \cong M^{+}/xM^{+}$. For the exact sequence $0 \to M^{+} \stackrel{x}{\longrightarrow} M^{+} \longrightarrow (M/xM)^{+} \to 0$, there is a two-sided long exact sequence of Tate cohomologies:
	\begin{align*}
	\cdots & \longrightarrow \widehat{\Ext}_R^i(L,M^{+}) \stackrel{x}{\longrightarrow} \widehat{\Ext}_R^i(L,M^{+}) \longrightarrow \widehat{\Ext}_R^i(L, (M/xM)^{+}) \\
	& \longrightarrow   \widehat{\Ext}_R^{i+1}(L,M^{+}) \longrightarrow \cdots.
	\end{align*}
	From this, by the Nakayama's Lemma, it can be derived that
	\begin{align}\label{Ext-equiv-mod-x-Tate}
	\widehat{\Ext}_R^i(L,M^{+}) = 0 \ \mbox{for all}\ i \ll 0 \Longleftrightarrow \widehat{\Ext}_R^i(L, (M/xM)^{+}) = 0  \ \mbox{for all}\ i \ll 0.
	\end{align}
	The exact sequence $0 \to M \stackrel{x}{\to} M \to M/xM \to 0$ yields a long exact sequence of Tate homologies:
	\[
		\cdots \rightarrow \widehat{\Tor}_i^R(L, M) \stackrel{x}{\longrightarrow} \widehat{\Tor}_i^R(L, M) \longrightarrow \widehat{\Tor}_i^R(L, M/xM) \longrightarrow \widehat{\Tor}_{i-1}^R(L, M) \rightarrow \cdots
	\]
	Again, by the Nakayama's Lemma, one obtains that
	\begin{equation}\label{Tor-equiv-mod-x-Tate}
		\widehat{\Tor}_i^R(L, M) = 0 \mbox{ for all } i \ll 0 \Longleftrightarrow \widehat{\Tor}_i^R(L, M/xM) = 0 \mbox{ for all } i \ll 0.
	\end{equation}
	Since $M/xM$ is CM of dimension $t-1$, by the induction hypothesis,
	\begin{equation}\label{Ext-Tor-vanishing-equiv-induction-step-Tate}
		\widehat{\Ext}_R^i(L, (M/xM)^{+}) = 0 \ \mbox{for all}\  i \ll 0 \Leftrightarrow \widehat{\Tor}_i^R(L, M/xM) = 0 \ \mbox{for all} \ i \ll 0.
	\end{equation}
	Putting together \eqref{Ext-equiv-mod-x-Tate}, \eqref{Tor-equiv-mod-x-Tate} and \eqref{Ext-Tor-vanishing-equiv-induction-step-Tate}, the desired equivalence follows.
\end{proof}

As a consequence of Lemma~\ref{lem:vanshing-ext-tor-Tate}, we obtain the desired result on symmetry in vanishing of Tate cohomologies.

\begin{theorem}\label{thm:Vanishing-Tate-coh}
	Along with Setup~\ref{setup:Gor-ring-M+}, let $M$ and $N$ be CM $R$-modules. Then the following are equivalent:
	\begin{enumerate}[\rm (1)]
		\item $\widehat{\Ext}_R^i(M,N^{+}) = 0$ for all $i \ll 0$.
		\item $\widehat{\Ext}_R^i(N,M^{+}) = 0$ for all $i \ll 0$.
		\item $\widehat{\Tor}_i^R(M, N) = 0$ for all $i \ll 0$.
	\end{enumerate}
\end{theorem}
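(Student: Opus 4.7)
The plan is to mirror the proof of Theorem~\ref{thm:Vanishing of Ext and Tor}: there, Lemma~\ref{lem:vanshing-ext-tor} combined with the symmetry $\Tor_i^R(M,N)\cong\Tor_i^R(N,M)$ delivered everything, and here Lemma~\ref{lem:vanshing-ext-tor-Tate} together with the symmetry of Tate homology plays the same role. Since $R$ is Gorenstein, every finitely generated $R$-module has finite G-dimension and hence admits a complete resolution, so each Tate (co)homology module appearing in (1)--(3) is defined.

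I would proceed in three short steps. First, apply Lemma~\ref{lem:vanshing-ext-tor-Tate} with $L:=M$ and with $N$ playing the role of the CM module of the lemma; since $N$ is CM, this yields
\[
\widehat{\Ext}_R^i(M,N^{+})=0 \text{ for all } i\ll 0 \ \Longleftrightarrow\ \widehat{\Tor}_i^R(M,N)=0 \text{ for all } i\ll 0,
\]
which is (1)$\Leftrightarrow$(3). Second, apply the same lemma with $L:=N$ and with $M$ playing the role of the lemma's CM module, obtaining the analogous equivalence
\[
\widehat{\Ext}_R^i(N,M^{+})=0 \text{ for all } i\ll 0 \ \Longleftrightarrow\ \widehat{\Tor}_i^R(N,M)=0 \text{ for all } i\ll 0.
\]

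The third step closes the circle via the symmetry $\widehat{\Tor}_i^R(M,N)\cong\widehat{\Tor}_i^R(N,M)$ for all $i\in\bz$. This is a standard balanced property of Tate homology for modules of finite G-dimension (see \cite{AM02}): if $T$ and $T'$ are complete resolutions of $M$ and $N$ respectively, then both sides are computed by the total complex of the double complex $T\otimes_R T'$. Combined with the second equivalence this gives (2)$\Leftrightarrow$(3), so all three conditions are equivalent.

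There is essentially no serious obstacle here: the substantive content, namely the spectral-sequence computation in the Artinian case and the inductive reduction by an $M$-regular element, has already been packaged in Lemma~\ref{lem:vanshing-ext-tor-Tate}. The only ingredient external to the paper is the balanced symmetry of $\widehat{\Tor}$, which is standard and widely available in the literature on Tate homology.
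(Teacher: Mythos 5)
Your proof is essentially the paper's: combine Lemma~\ref{lem:vanshing-ext-tor-Tate} (applied twice, once with $L=M$ against $N$ and once with $L=N$ against $M$) with the balance property $\widehat{\Tor}_i^R(M,N)\cong\widehat{\Tor}_i^R(N,M)$, which the paper cites to Iacob \cite{Ia07} and Christensen--Jorgensen \cite{CJ14}. One small caveat: your heuristic justification of balance via the total complex of $T\otimes_R T'$ is not quite right (the complexes are unbounded in both directions, so the naive double-complex argument breaks down --- this is exactly why the ``pinched complex'' machinery of \cite{CJ14} is needed, and \cite{AM02} does not actually prove this balance), but since you are invoking it as a known external result the overall argument is sound and matches the paper's.
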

%

\begin{proof}
	It is known that $\widehat{\Tor}_i^R(M, N) \cong \widehat{\Tor}_i^R(N, M)$ for all $i$, due to \cite[Thm.~3]{Ia07}, see also \cite[Thm.~3.7]{CJ14}. Hence the theorem follows from Lemma~\ref{lem:vanshing-ext-tor-Tate}.
\end{proof}

\section*{\bf Declarations}
{\bf Ethical approval:} The work in this article is original, and it is not submitted anywhere else for possible publication.

{\bf Competing interest:} None of the authors have any competing interests.

{\bf Authors' contributions:} Both the authors have equal contributions on each of the following: Investigation, Methodology, Writing the article and Review.

{\bf Funding:} Ghosh was supported by Startup Research Grant, SERB, DST, Govt.~of India, SRG/2020/000597.

{\bf Availability of data and materials:} This article has no associated data.


\begin{thebibliography}{AAAA}
%
	
	\bibitem{AB69} M.~Auslander and M.~Bridger, {\it Stable module theory}, Mem. Amer. Math. Soc. {\bf 94} (1969).
%
%
	
	\bibitem{AB00}
	L.L. Avramov and R-O.~Buchweitz, {\it Support varieties and cohomology over complete intersections}, Invent. Math. {\bf 142} (2000), 285--318.
%
	
	\bibitem{AM02} L.~L.~Avramov and A.~Martsinkovsky, {\it Absolute, relative, and Tate cohomology of modules of finite Gorenstein dimension}, Proc. London Math. Soc. {\bf 85} (2002), 393--440.
	
	\bibitem{BS98} M.~P.~Brodmann and R.~Y.~Sharp, \emph{Local cohomology: an algebraic introduction with geometric applications}, Cambridge Studies in Advanced Mathematics {\bf 136}, Second Edition, Cambridge University Press, Cambridge, 2013.
	
	\bibitem{BH93} Winfried Bruns and J\"urgen Herzog, \emph{Cohen-{M}acaulay rings}, Cambridge Studies in Advanced Mathematics, vol.~39, Cambridge University Press, Cambridge, 1993.
%
%
	
	\bibitem{CS16} Olgur Celikbas and Sean Sather-Wagstaff, {\it Testing for the Gorenstein property}, Collect. Math. {\bf 67} (2016) 555--568.
	
	\bibitem{Cr} L.W.~Christensen, {\it Gorenstein dimensions}, Lecture Notes in Mathematics {\bf 1747}, Springer-Verlag, Berlin, 2000.
	
	\bibitem{CJ14} L.W.~Christensen and D.A.~Jorgensen,	{\it Tate (co)homology via pinched complexes},	Trans. Amer. Math. Soc. {\bf 366} (2014), 667--689. 
	
	\bibitem{CK97} J.~Cornick and P.H.~Kropholler, {\it On complete resolutions}, Topology Appl. {\bf 78} (1997), 235--250.

	\bibitem{DMT14} K.~Divaani-Aazar, F.M.A.~Mashhad and M.~Tousi, {\it On the existence of certain modules of finite Gorenstein homological dimensions}, Comm. Algebra {\bf 42} (2014), 1630--1643.
	
	\bibitem{DMTT20} K.~Divaani-Aazar, F.M.A.~Mashhad, E.~Tavanfar and M.~Tousi, {\it On the new intersection theorem for totally reflexive modules}, Collect. Math. {\bf 71} (2020), 369--381.
	
	
	\bibitem{JL} Jorgensen, David A.; Leuschke, Graham J., {\it On the growth of the Betti sequence of the canonical module}, Math. Z. {\bf 256} (2007), 647--659.
	
	\bibitem{JS04} D.~Jorgensen, L.M.~Sega, {\it Nonvanishing cohomology and classes of Gorenstein rings}, Advances in Mathematics {\bf 188} (2004) 470--490.
	
	\bibitem{TH} Thiago Fiel and Rafael Holanda, {\it Bass and Betti numbers of a module and its deficiency modules}, \href{https://arxiv.org/pdf/2112.09724.pdf}{arXiv:2112.09724}.
	
	\bibitem{JS05} D.~Jorgensen, L.M.~Sega, {\it Asymmetric complete resolutions and vanishing of Ext over Gorenstein rings}, Int. Math. Res. Not. (2005), 3459--3477.

	
	\bibitem{Fo71} Foxby, H.B., {\it On the $\mu^i$ in a minimal injective resolution}, Math. Scand. {\bf 29} (1971), 175--186.
	
	\bibitem{Fo75} Foxby, H.B., {\it Quasi-perfect modules over Cohen-Macaulay rings}, Math. Nachr. {\bf 66} (1975), 103--110.
	
	\bibitem{Fo77} H.-B.~Foxby, {\it Isomorphisms between complexes with applications to the homological theory of modules}, Math. Scand. {\bf 40} (1977), no. 1, 5--19.
	
%
	\bibitem{GGP} D. Ghosh, A. Gupta and T. J. Puthenpurakal, {\it Characterizations of regular local rings via syzygy modules of the residue field}, J. Commut. Algebra {\bf 10} (2018), 327--337.
	
	\bibitem{GS22} D. Ghosh and A. Saha, {\it Homological dimensions of Burch ideals, submodules and quotients}, \href{https://arxiv.org/pdf/2212.07418.pdf}{arXiv:2212.07418}.
	
	\bibitem{GT21} D. Ghosh and R. Takahashi, {\it Auslander-Reiten conjecture and finite injective dimension of Hom}, Kyoto J. Math. (to appear), \href{https://arxiv.org/pdf/2109.00692.pdf}{arXiv:2109.00692}.
	
	\bibitem{Go84} Golod, E. S., {\it G-dimension and generalized perfect ideals}, Algebraic geometry and its applications, Trudy Mat. Inst. Steklov. {\bf 165} (1984), 62--66.
	
	
	\bibitem{HJ03} Huneke, Craig and Jorgensen, David A., {\it Symmetry in the vanishing of Ext over Gorenstein rings}, Math. Scand. {\bf 93} (2003), 161--184. 
	
	\bibitem{Ia07} A.~Iacob, {\it Absolute, Gorenstein, and Tate torsion modules},	Comm. Algebra {\bf 35} (2007), 1589--1606.
	
	
	\bibitem{PS73} C.~Peskine, L.~Szpiro, {\it Dimension projective finie et cohomologie locale. Applications a la demonstration de conjectures de M. Auslander, H. Bass et A. Grothendieck}, Inst. Hautes Etudes Sci. Publ. Math., No. 42, (1973), 47--119.
	
	\bibitem{Rob87} P. Roberts, {\it Le th\'{e}or\`{e}me d'intersection}, C.R. Acad. Sci. Paris 304 (1987), 177--180.
	
	\bibitem{Ro09} J. J. Rotman, {\it An Introduction to Homological Algebra}, Second Edition, Universitext, Springer, New York, 2009.
	
	\bibitem{Ta04} R.~Takahashi, {\it Some characterizations of Gorenstein local rings in terms of G-dimension}, Acta Math. Hungar. {\bf 104} (2004), 315--322.
	
	\bibitem{Tak06} R. Takahashi, {\it Syzygy modules with semidualizing or G-projective summands}, J. Algebra {\bf 295} (2006), 179--194.
	
	
	\bibitem{We94} C.A.~Weibel, {\it An introduction to homological algebra}, Cambridge University Press, Cambridge, 1994.
	
\end{thebibliography}
\end{document}